\theoremstyle{definition}
\theoremstyle{plain}
\newtheorem{theorem}{Theorem}
\newtheorem{lemma}{Lemma}
\newtheorem{corollary}{Corollary}
\theoremstyle{remark}
\newtheorem{example}{Example}
\newcommand{\T}{\mathcal T}
\renewcommand{\SS}{{\mathscr S}}
\newcommand{\Diag}{\operatorname{Diag}}
\newcommand{\ip}[2]{\langle #1 , #2 \rangle}    
\newcommand{\R}{\mathbb{R}}
\newcommand{\matr}[1]{\begin{bmatrix} #1 \end{bmatrix}}    
\renewcommand{\H}{\mathcal H}
\newcommand{\dom}{{\mathrm{dom}}}
\DeclareMathOperator{\Image}{Im}
\newcommand{\graph}{{\mathrm{graph}}}
\newcommand{\ri}{{\mathrm{ri}}}
\renewcommand{\S}{{\mathfrak S}}
\def\transp{^{\text{\sf T}}}
\newcommand{\dist}{{\mathrm{dist}}}
\newcommand{\B}{\mathbb{B}}
\author{Javier F. Pe\~na\thanks{Tepper School of Business,
Carnegie Mellon University, USA, {\tt jfp@andrew.cmu.edu}} \and Juan C. Vera\thanks{Department of Econometrics and
Operations Research,
Tilburg University, The Netherlands, {\tt j.c.veralizcano@uvt.nl}}
\and Luis F. Zuluaga\thanks{Department of Industrial and Systems Engineering, Lehigh University, USA, {\tt
luis.zuluaga@lehigh.edu}}}
\title{Duality of Hoffman constants}
\begin{document}

\maketitle

\begin{abstract}

We show that a suitable Slater condition implies a {\em duality inequality} between the Hoffman constants of the following feasibility problems:
\[
\begin{array}{r}
Ax-b \in S\\
x \in R
\end{array} 
\qquad\text{ and }\qquad  
\begin{array}{r}
c-A\transp y \in R^*\\
y \in S^*,
\end{array}
\]
where $A\in \R^{m\times n}$, and $R\subseteq \R^n$ and $S\subseteq \R^m$ are {\em reference} polyhedral cones, with respective dual cones $R^*\subseteq \R^n$ and $S^*\subseteq \R^m$.
Our approach relies on an exact characterization of Hoffman constants and introduces a novel {\em Hoffman duality inequality} for polyhedral set-valued mappings. These two fundamental results also yield a striking identity between the Hoffman constants of {\em box-constrained} feasibility problems, which feature a similar primal-dual structure with a box and a linear subspace as reference sets. Additionally, we establish a surprising identity between the Hoffman constants of box-constrained feasibility problems and the chi condition measures for weighted least-squares problems.
\end{abstract}


\section{Introduction}

Several recent papers show a surge of interest in the central role that Hoffman bounds, and more general error bounds, play in mathematical optimization.   In particular, Hoffman constants play a central role in the modern development of first-order algorithms for linear programming~\cite{ApplHLL22,Hind23,LuY23} and in recent advances on the conditional gradient method~\cite{GutmP21,PenaR19,Zhao23}.  Error bounds are instrumental for many other results on the convergence of a variety of optimization algorithms~\cite{ApplHLL22,BeckS17,DaduNV20,GaoK20,LacoJ15,LeveL10,LuoT93,WangL14, xia2020globally,YeYZZ21}.

  The now vast literature on error bounds (see~\cite{AzeC02,burke1996,guler1995,Li93,MangS87,PenaVZ21,robinson1973,Zali03,zheng2004} and the many references therein) started with the seminal paper of Hoffman~\cite{Hoff52}, who showed that the distance from a point $v\in \R^m$ 
 to a nonempty polyhedron of the form $\{y\in \R^m: A\transp y \le c\}$ can be bounded above in terms of a constant that depends only on $A$ and the magnitude of the violation of the constraints $A\transp v \le c$.  

The Hoffman error bound extends to the following more general context.  Suppose $A\in \R^{m\times n}$, and $R\subseteq \R^n$ and $S\subseteq \R^m$ are {\em reference} polyhedra such as a polyhedral cone, a box, or a simplex.
Hoffman's bound implies that the distance from a point $u\in R$ to a nonempty polyhedron of the form $\{x\in R: Ax-b\in S\}$ can also be bounded in terms of a {\em Hoffman constant} that depends only on the triple $(A,R,S)$ and the magnitude of the violation of the constraints $Ax-b\in S$. 

These general Hoffman constants are a key component for establishing linear convergence in various first-order algorithmic schemes, as featured in recent papers~\cite{ApplHLL22,Hind23,LuY23}. A related development is the technique in~\cite{xia2020globally}, which relies on the characterization of Hoffman constants from~\cite{PenaVZ21} to construct tight integer programming formulations for complementarity constraints. In addition,~\cite{van2023bounding,pena2021linear} use new characterizations of Hoffman constants to develop novel analyses of linear convergence for other popular optimization algorithms, such as the Douglas-Rachford method and the closely related Alternating Direction Method of Multipliers, without the need for
strong convexity assumptions.

The renewed interest in Hoffman constants and the central role that duality plays in optimization have inspired us to investigate the duality properties of Hoffman constants.  More precisely, this paper is concerned with the following natural duality question concerning Hoffman constants:

\begin{quote}
Suppose $A\in \R^{m\times n}$ and $R\subseteq \R^n$ and $S\subseteq \R^m$ are  reference polyhedral cones  with dual cones $R^*\subseteq \R^n, \; S^*\subseteq \R^m$.  What is the relationship between the Hoffman constants of the feasibility problems
\[
\begin{array}{r}
Ax-b \in S\\
x \in R
\end{array}
\qquad 
\text{ and }
\qquad
\begin{array}{r}
c-A\transp y \in R^*\\
y \in S^*?
\end{array}
\]
\end{quote}
We show a strong relationship between these Hoffman constants.  More precisely, we show that a suitable {\em Slater} condition implies a  {\em duality inequality} between the Hoffman constants of the above two feasibility problems (Corollary~\ref{cor.onesided.dual}). The direction of this inequality depends on which Slater condition holds.  
The crux of our duality developments are a {\em Hoffman duality inequality} for solution mappings of general polyhedral feasibility problems (Theorem~\ref{thm.dual.ineq}) 
and an extension of our previous Hoffman constant characterization~\cite{PenaVZ21} (Theorem~\ref{thm.Hoffman.char}). 

The Slater condition plays a key role in the duality properties in Corollary~\ref{cor.onesided.dual}.  This may be counterintuitive since strong duality always holds for primal-dual convex polyhedral optimization problems.  This role of the Slater condition lies at the core of both Theorem~\ref{thm.Hoffman.char} and Theorem~\ref{thm.dual.ineq}, as it leads to a convenient characterization of the Hoffman constant (see identity~\eqref{eq.hoffman.special} in Theorem~\ref{thm.Hoffman.char}) and to a general Hoffman duality inequality (see~Theorem~\ref{thm.dual.ineq}).

The duality properties in Corollary~\ref{cor.onesided.dual} are inherently asymmetric. This is because the Slater condition can only hold for one of the feasibility problems, as illustrated by Examples~\ref{ex.primal} and~\ref{ex.dual}. However, Theorem~\ref{thm.Hoffman.char} and Theorem~\ref{thm.dual.ineq} lead to a striking symmetric {\em duality identity} for box-constrained feasibility problems: Let $\B \subseteq \R^n$ be a {\em box} and $L\subseteq \R^m$ be a {\em linear subspace} with orthogonal complement $L^\perp\subseteq \R^m$. In Theorem~\ref{thm.box.dual}  we show that the Hoffman constants of the following box-constrained feasibility problems are identical:
\[
\begin{array}{r}
Ax-b \in L\\
x \in \B
\end{array}
\qquad 
\text{ and }
\qquad
\begin{array}{r}
c-A\transp y \in \B\;\;\;\\
y \in L^\perp.
\end{array}
\]

We also show that if $\R^n$ and $\R^m$ are endowed with Euclidean norms, then for suitable $A$ and $L$, the Hoffman constants of these box-constrained feasibility problems match the {\em chi} condition measures $\chi(A)$ and $\overline\chi(A)$ (Theorem~\ref{thm.chi.hoffman}). The chi condition measures were introduced in~\cite{Olea90,Stew89}. They play a key role in weighted least squares~\cite{BobrV01,Fors96,ForsS01} and in the layered-step interior-point algorithm of Vavasis and Ye~\cite{MontT03,VavaY96}. The identities between Hoffman constants of box-constrained feasibility problems and chi condition measures (Theorem~\ref{thm.chi.hoffman}) are notable since these constants are defined differently and apply to distinct problems.

Similar to the chi condition measures, the {\em kappa circuit imbalance measure} was recently developed by Dadush, Natura, and V\'egh~\cite{DaduNV20} and by Dadush, Huiberts, Natura, and V\'egh~\cite{DaduHNV20,DaduHNV24}. The kappa measure plays an analogous role to that of the chi measures in~\cite{MontT03,VavaY96} but for more recent algorithmic developments for linear programming~\cite{DaduHNV20,DaduHNV24,EkbaNV22,Natu22}. We conjecture that, with an appropriate choice of norms, an identity like the one for the chi measures (Theorem~\ref{thm.chi.hoffman}) also holds. Specifically, this identity would equate the Hoffman constant to the kappa circuit imbalance measure in lieu of the chi measure. Indeed, key results in~\cite{DaduNV20} indicate a connection between Hoffman constants and the kappa circuit measure. A glance at~\cite[Corollary 3.2 and Corollary 3.3]{DaduNV20} suggests that the kappa measure serves as a proxy for a Hoffman constant. 
\medskip

Our duality results rely on a characterization of the Hoffman constant in~\cite{PenaVZ21}. This characterization is similar in spirit to those previously documented in~\cite{AzeC02,guler1995,KlatT95,Zali03,zheng2004}. Although these earlier results have been known for decades, to the best of our knowledge, there are no duality results of the kind we develop here.  Developing such results has been challenging due to the lack of exact characterizations of Hoffman constants for general feasibility problems of the form $Ax-b\in S,\; x\in R$ where $S$ and $R$ are reference polyhedral sets. 
Indeed, most of the existing characterizations of Hoffman constants, including those in~\cite{AzeC02,guler1995,KlatT95,Zali03,zheng2004}, consider only feasibility problems in {\em standard inequality-equality format} $Ax \le b, \, Cx = d$ or the more restricted  {\em standard inequality-only format} $Ax \le b$.  In principle, any feasibility problem of the form $Ax-b\in S,\; x\in R$ for reference polyhedra $S$ and $R$ can be recast in standard inequality-equality or inequality-only format.  Therefore, 
 {\em upper bounds} on its Hoffman constant readily follow from the Hoffman constant of its standard inequality-equality reformulation.  However, this approach yields only loose upper bounds, since recasting in inequality-equality format would inevitably introduce spurious dependencies on the descriptions of $R$ and $S$.  This shortcoming applies, for example, to~\cite[Prop. 3]{xia2020globally}.  Without a characterization of the exact Hoffman constants for general feasibility problems of the form $Ax-b\in S,\ x\in R$, it is impossible to state a meaningful comparison of Hoffman constants between different problem instances. As we formally state in Theorem~\ref{thm.Hoffman.char}, the more recent article~\cite{PenaVZ21} can be leveraged to {\em exactly} characterize Hoffman constants for the general class of feasibility problems that we consider. This is a key component of our duality developments.

\subsection{Overview of main results}\label{sec.overview}

We will now summarize our main results and introduce some of the notation and terminology used throughout this paper. Let \( A \in \mathbb{R}^{m \times n} \), and let \( R \subseteq \mathbb{R}^n \) and \( S \subseteq \mathbb{R}^m \) be some {\em reference polyhedra.} Consider the feasibility problem defined by the triple \( (A, R, S) \) and $b\in \R^m$: 
\begin{equation}
\label{eq.onesided.systems.general}
\begin{array}{r} Ax - b \in S \\ x \in R. \end{array} 
\end{equation}
This format encompasses a broad class of feasibility problems. Our main duality results focus on the specific case where both \( R \) and \( S \) are polyhedral cones, such as the non-negative orthant, a linear subspace, or a linear transformation of a Cartesian product of these cones.

Since our interest lies in the behavior of the {\em solution set} for feasibility problems, we will utilize standard {\em set-valued mappings} terminology and notation, as discussed in the popular textbooks by Rockafellar and Wets~\cite{RockWets09}, Kahn et al.~\cite{KhanTZ16}, Ioffe~\cite{Ioffe17}, and Mordukhovich~\cite{Mord18}.  We briefly recall the terminology and notation of set-valued mappings that suffice for our purposes in Section~\ref{sec.Hoffman.duality}.
Set-valued mappings frequently arise in optimization, nonsmooth analysis, variational inequalities, and equilibrium models. In contrast to point-to-point mappings, set-valued mappings map points to sets. This feature allows for more complex and flexible relationships.  Set-valued mappings are a natural tool for studying generalized constraints, subdifferentials, and optimality conditions where it is critical to get a grasp on the entire solution set to a problem.
Set-valued mappings are particularly prevalent in the literature on Hoffman constants and error bounds, as highlighted in ~\cite{CamaCGP23,CamaCGP22,PenaVZ21}.

Define the {\em solution mapping} $P_{A,R,S}:\R^m\rightrightarrows \R^n$  corresponding to the feasibility problem~\eqref{eq.onesided.systems.general} as follows
\begin{equation}\label{eq.PA.gral}
P_{A,R,S}(b):=\{x\in R: Ax-b\in S\}.
\end{equation}
In other words, $P_{A,R,S}$ maps each $b\in \R^m$ to the (possibly empty) set of solutions $P_{A,R,S}(b)\subseteq\R^n$ to the feasibility problem~\eqref{eq.onesided.systems.general}.  

The role of norms and their duals is central to our developments.  To that end, 
suppose $\R^n$ and $\R^m$ are endowed with some (primal) norms $\|\cdot\|$.  Let $\dist(\cdot,\cdot)$ denote the point-to-set distance defined by the norm $\|\cdot\|$.  
Let $\|\cdot\|^*$ denote the dual norms in $\R^n$ and $\R^m$ and let $\dist^*(\cdot,\cdot)$ denote the point-to-set distance defined by the norm $\|\cdot\|^*$. Also, given a matrix $A \in \R^{m \times n}$ and sets $R \subseteq \R^n, \, S\subseteq \R^m$, we will write $A(R)-S$ to denote the set $\{Ax-s: x \in R, \; s\in S\}$. 

Hoffman's Lemma~\cite{Hoff52} implies that there exists a finite constant $H$ that depends only on $(A,R,S)$ such that for all 
$x\in R$ and $b\in A(R)-S$ 
\begin{equation}\label{eq.hoff.PA}
\dist(x,P_{A,R,S}(b)) \le H \cdot \dist(Ax-b,S).
\end{equation}

Throughout the paper we will write $\H(P_{A,R,S})$ to denote the sharpest constant $H$ satisfying~\eqref{eq.hoff.PA}, that is,
\begin{equation}\label{eq.def.hoffman.PA}
\H(P_{A,R,S}) = \sup_{b\in A(R)-S \atop x\in R\setminus P_{A,R,S}(b)} \frac{\dist(x,P_{A,R,S}(b))}{\dist(Ax-b,S)}.
\end{equation}

In the special case when $R$ and $S$ are convex cones, there is a natural dual counterpart of~\eqref{eq.onesided.systems.general}, namely, the feasibility problem
\begin{equation}\label{eq.onesided.systems.general.dual}
 \begin{array}{r}
c-A\transp y \in R^*\\
y \in S^*,
\end{array}
\end{equation}
where $R^*\subseteq\R^n$ and $S^*\subseteq \R^m$ are the dual cones of $R$ and $S$ respectively.

Observe that the problem~\eqref{eq.onesided.systems.general.dual} has the same format as~\eqref{eq.onesided.systems.general} but is defined by the triple $(A\transp, S^*, -R^*)$ in lieu of $(A,R,S)$.  In particular, the solution mapping $P_{A\transp,S^*,-R^*}: \R^n\rightrightarrows \R^m$ to~\eqref{eq.onesided.systems.general.dual} is
\begin{equation}\label{eq.QA.gral}
P_{A\transp,S^*,-R^*}(c)=\{y\in S^*: A\transp y - c\in -R^*\} = \{y\in S^*: c-A\transp y\in R^*\}.
\end{equation}
Let $\ri(\cdot)$ denote the relative interior of a set.
We show the following duality result (Corollary~\ref{cor.onesided.dual}):  If the {\em primal} Slater condition $Ax \in \ri(S),\, x \in \ri(R)$ holds for some $x\in \R^n$ then $\H(P_{A,R,S}) \le \H(P_{A\transp,S^*,-R^*})$.  In natural symmetric fashion,  
if the {\em dual} Slater condition $-A\transp y \in \ri(R^*),\, y \in \ri(S^*)$ holds for some $y\in \R^m$ then $ \H(P_{A\transp,S^*,-R^*}) \le \H(P_{A,R,S})$. 
The building blocks for Corollary~\ref{cor.onesided.dual} are a generic {\em Hoffman duality inequality} (Theorem~\ref{thm.dual.ineq}) 
and a  characterization of Hoffman constants (Theorem~\ref{thm.Hoffman.char}).

\medskip

The above duality result (Corollary~\ref{cor.onesided.dual}) applies to the case when $R$ and $S$ are cones but its building blocks, namely Theorem~\ref{thm.Hoffman.char} and Theorem~\ref{thm.dual.ineq}, also yield an interesting duality result in the following non-conic case. Suppose $R=\B\subseteq \R^n$ is a box and $S = L\subseteq \R^m$ is a linear subspace with orthogonal complement $L^\perp\subseteq \R^m$. In Theorem~\ref{thm.box.dual} we show the {\em  identity} 
$\H(P_{A,\B,L}) = \H(P_{A\transp,L^\perp,-\B})$ between the Hoffman constant of the solution mapping $P_{A,\B,L}:\R^m\rightrightarrows \R^n$ to the feasibility problem
\begin{equation}\label{eq.box-constrained}
\begin{array}{r}
Ax-b \in L\\
x \in \B
\end{array}
\end{equation}
and the Hoffman constant of the solution mapping $P_{A\transp,L^\perp,-\B}:\R^n\rightrightarrows \R^m$ to the feasibility problem
\begin{equation}\label{eq.box-constrained.dual}
\begin{array}{r}
c-A\transp y \in \B\;\;\;\\
y \in L^\perp.
\end{array}
\end{equation}
It is important to notice that~\eqref{eq.box-constrained} and~\eqref{eq.box-constrained.dual} are not dual to each other. 

\medskip

In addition to the above duality properties, we also show a striking identity (Theorem~\ref{thm.chi.hoffman}) between the Hoffman constants for box-constrained feasibility problems~\eqref{eq.box-constrained} and the chi condition measures for weighted least squares problems~\cite{BobrV01,Fors96,ForsS01,Olea90,Stew89} when the underlying norms are Euclidean.  The identity of these quantities is particularly tantalizing because the Hoffman constant and the chi measures are constructed and intended to capture conditioning properties of fundamentally different problems.  Hoffman constants are concerned with error bound properties for linear inequalities and are oblivious to scaled projections.  By contrast, the chi measures are concerned with norms of scaled projections and are oblivious to inequalities.  Our identity is surprising and rectifies a long-standing misconception in the literature on the relation between chi measures and Hoffman bounds~\cite{HoT002}.  More precisely, some of our results (Theorem~\ref{thm.chi.hoffman} and Lemma~\ref{cor.signed}) readily show that, contrary to the explicit statement at the end of~\cite[Section 4]{HoT002}, the inequality between the two quantities in~\cite[Theorem 4.6]{HoT002} can be strengthened to equality.

\section{Hoffman duality inequality}
\label{sec.Hoffman.duality}
This section outlines the foundational elements of our developments, namely a key characterization of Hoffman constants (Theorem~\ref{thm.Hoffman.char}) and the {\em Hoffman duality inequality} (Theorem~\ref{thm.dual.ineq}).  The latter in turn yields Corollary~\ref{cor.onesided.dual} which establishes our main duality result concerning the feasibility problems~\eqref{eq.onesided.systems.general} and~\eqref{eq.onesided.systems.general.dual}

To introduce and motivate our notation, we describe a general format for formulating feasibility problems.  Suppose the triple $(\R^p,\R^q,G)$ is such that $G \subseteq \R^p \times \R^q$ and consider the following generic feasibility problem: 
\[
\text{ given } u \in \R^p \text{ find } v \in \R^q \text{ such that } (u,v) \in G. 
\]
The natural {\em solution mapping} $\Phi_G: \R^p\rightrightarrows \R^q$ for this feasibility problem is as follows
\[
\Phi_G(u):=\{v\in \R^q: \text{ such that } \; (u,v) \in G\}.
\]
 The following example illustrates how the solution mapping $P_{A,R,S}$ defined via~\eqref{eq.PA.gral} in Section~\ref{sec.overview} is an instance of the solution mapping $\Phi_G$ for a suitable chosen $G$. 
 
\medskip
 
\begin{example} Suppose $R\subseteq \R^n$ and $S\subseteq\R^m$ are reference polyhedra and  $A\in \R^{m\times n}$. For
$G = \{(Ax-s,x): x\in R, s\in S\}\subseteq \R^m\times \R^n$ the solution mapping $\Phi_G$ is precisely  $P_{A,R,S}$, that is, the  solution mapping defined in~\eqref{eq.PA.gral}.
\end{example}

\medskip

The above construction highlights the natural connection between feasibility problems and set-valued mappings and introduces a key concept: The set $G$ corresponds to the {\em graph} of the set-valued mapping $\Phi_G$.  In the sequel we will rely on the basic concepts of graph, domain, and image of set-valued mappings as described in~\cite[Chapter 5]{RockWets09} and~\cite[Chapter 1]{KhanTZ16}. 

Suppose $\Phi:\R^p\rightrightarrows \R^q$ is a set-valued mapping, that is, $\Phi(u) \subseteq \R^q$ for each $u\in \R^p$. The {\em graph} of $\Phi$ is 
\begin{equation}
\label{eq:graphdef}
\graph(\Phi) = \{(u,v)\in \R^p\times \R^q: v\in \Phi(u)\}.
\end{equation}
Observe that $\Phi$ is precisely the solution mapping of the following feasibility problem,
\[
\text{ given } u \in \R^p \text{ find } v \in \R^q \text{ such that } (u,v) \in \graph(\Phi). 
\]
The domain $\dom(\Phi)\subseteq\R^p$, and image $\Image(\Phi)\subseteq\R^q$, of $\Phi$ are as follows 
\[
\dom(\Phi) := \{u\in \R^p: \Phi(u) \ne \emptyset\}, \; \Image(\Phi):=\bigcup_{u\in \R^p} \Phi(u).
\]
It is easy to see that the $\dom(\Phi)$ and $\Image(\Phi)$ are the projections of $\graph(\Phi)$ onto $\R^p$ and $\R^q$ respectively.

The inverse mapping $\Phi^{-1}:\R^q\rightrightarrows \R^p$ is defined via $u\in \Phi^{-1}(v) \Leftrightarrow v\in \Phi(u)$.  Observe that
\[
\graph(\Phi^{-1}) = \{(v,u)\in \R^q\times \R^p: (u,v) \in \graph(\Phi)\},
\]
and consequently $\dom(\Phi) = \Image(\Phi^{-1})$ and $
\Image(\Phi) = \dom(\Phi^{-1})$.

A set-valued mapping $\Phi:\R^p\rightrightarrows \R^q$ is {\em polyhedral} if  $\graph(\Phi)\subseteq \R^p\times \R^q$ is a polyhedron.  Suppose $\Phi:\R^p\rightrightarrows \R^q$
is a polyhedral mapping and each of the spaces $\R^p$ and $\R^q$ is endowed with some norm $\|\cdot\|$. 
Define the {\em Hoffman constant}  $\H(\Phi)$ as the sharpest $H$ such that for all $u\in \dom(\Phi)=\Image(\Phi^{-1})$ and all $v\in \Image(\Phi)= \dom(\Phi^{-1})$ the following inequality holds
\[
\dist(v,\Phi(u)) \le H \cdot \dist(u,\Phi^{-1}(v)).
\]
In other words, the Hoffman constant $\H(\Phi)$ is 
\begin{equation}\label{eq.def.Hoffman}
\H(\Phi) := \sup_{u\in\dom(\Phi)\atop 
v\in \Image(\Phi)\setminus\Phi(u)}\frac{\dist(v,\Phi(u))}{\dist(u,\Phi^{-1}(v))}.
\end{equation}
Note that the right-hand side expression in~\eqref{eq.def.Hoffman} is well-defined.  Indeed, since $\Phi$ is polyhedral, it follows that $\Phi^{-1}(v)$ is nonempty and closed for all $v\in \Image(\Phi)$.  Thus $\dist(u,\Phi^{-1}(v)) > 0$ whenever $u\in\dom(\Phi)$ and $v\in \Image(\Phi)\setminus \Phi(u)$.

\medskip

A straightforward verification shows that for $\Phi = P_{A,R,S}$ the definition of the Hoffman constant $\H(P_{A,R,S})$ in~\eqref{eq.def.Hoffman} indeed matches the one in~\eqref{eq.def.hoffman.PA}.  
\medskip

We should clarify a minor but important detail concerning the notation $\H(\Phi)$ of the Hoffman constant~\eqref{eq.def.Hoffman}.  In our previous article~\cite{PenaVZ21}, this constant was denoted $\H(\Phi^{-1})$.  Thus our references to~\cite{PenaVZ21}, such as Theorem~\ref{thm.Hoffman.char} below, have been adjusted accordingly.  Although this is only a choice of notation, and thus of no fundamental relevance, the notation in~\eqref{eq.def.Hoffman} is more convenient as it emphasizes the role of the solution mapping $\Phi$.  It is also consistent with the notation used in other recent articles~\cite{CamaCGP23,CamaCGP22}.

\medskip

We next recall some key concepts that lie at the heart of our duality developments, namely {\em sublinear mappings} and the {\em upper adjoint}.  
A set-valued mapping $\Phi:\R^p\rightrightarrows \R^q$ is {\em sublinear} if $0 \in \Phi(0)$ and $\lambda \Phi(x) + \lambda' \Phi(x') \subseteq \Phi(\lambda x + \lambda' x')$ for all $\lambda, \lambda' \ge 0$ and $x, x'\in \R^p$. In other words,
a set-valued mapping is sublinear if and only if $\graph(\Phi)$ is a convex cone. Set-valued sublinear mappings are also known as convex processes~\cite{Borw86,Robi72}.

Suppose $\Phi:\R^p\rightrightarrows \R^q$ is sublinear.  The {\em upper adjoint} $\Phi^*:\R^q\rightrightarrows
\R^p$ is defined as 
\begin{equation}\label{eq.upper.adjoint}
z\in \Phi^*(w) \Leftrightarrow  \ip{z}{u}\le\ip{w}{v} \text{ for all } (u,v) \in \graph(\Phi).
\end{equation}
Observe that 
\begin{equation}\label{eq.upperAdj.graph}
\graph(\Phi^*) = \{(w,z): (-z,w) \in \graph(\Phi)^*\}.
\end{equation}

The following example illustrates the upper adjoint in a particularly relevant case for our developments.

\begin{example}\label{prop.adjoint} 
Suppose $R\subseteq \R^n, S\subseteq\R^m$ are reference polyhedral cones and  $A\in \R^{m\times n}$.  Then the mappings $P_{A,R,S}$ and $P_{A\transp,S^*,-R^*}$ defined via~\eqref{eq.PA.gral} and~\eqref{eq.QA.gral} are sublinear and
\[(P_{A,R,S})^* = P_{A\transp,S^*,-R^*}.\] 
\end{example}

To ease notation, in the special case when  $A\in \R^{m\times n}, R= \R^n_+,$ and $S=\{0\}\in \R^m$ we shall write $P_A$ as shorthand for $P_{A,\R^n_+,\{0\}}$ and $P_{A}^*$ as shorthand for its upper adjoint $(P_{A,\R^n_+,\{0\}})^* = P_{A\transp, \R^m,-\R^n_+}$.  In other words, $P_A:\R^m\rightrightarrows \R^n$ is the mapping defined via 
\begin{equation}\label{eq.PA}
P_A(b) = \{x\ge 0: Ax=b\}
\end{equation}
and $P_A^*:\R^n\rightrightarrows \R^m$ is the mapping
defined via 
\begin{equation}\label{eq.QA}
P_A^*(c) = \{y\in \R^m: A\transp y \le c\}.
\end{equation}

Our main developments hinge on the characterization of  Hoffman constants in Theorem~\ref{thm.Hoffman.char} below.  This characterization is crucial for this article and has broader implications that extend beyond our specific context. In fact, the code available in the following repository utilizes Theorem~\ref{thm.Hoffman.char} to compute Hoffman constants:
\begin{center}
\href{https://github.com/javi-pena/HoffmanCode}{https://github.com/javi-pena/HoffmanCode}.
\end{center}
This code has been used in~\cite{xia2020globally} to obtain integer programming formulations of quadratic programs.

Theorem~\ref{thm.Hoffman.char} relies on specific collections of tangent cones associated with a polyhedral set-valued mapping. We will also use these collections of tangent cones in some of our developments.  Suppose $\Phi:\R^p\rightrightarrows \R^q$
is a polyhedral mapping.  For $(u,v)\in \graph(\Phi)$, or equivalently for $v\in \Phi(u)$,  let $T_{(u,v)}(\Phi)$ denote the tangent cone to $\graph(\Phi)$ at $(u,v)$, that is,
\[
T_{(u,v)}(\Phi) = \{(w,z)\in \R^p\times \R^q: (u,v) + t(w,z) \in \graph(\Phi) \text{ for some } t>0\}.
\]  
Let $\T(\Phi)$ denote the collection of all tangent cones to $\graph(\Phi)$, that is,
\[
\T(\Phi):=\{T_{(u,v)}(\Phi): (u,v) \in \graph(\Phi)\}.
\]
For $T\in \T(\Phi)$ let $\Phi_T:\R^p\rightrightarrows \R^q$ be the sublinear mapping defined via
\[
z\in \Phi_T(w) \Leftrightarrow (w,z) \in T.
\]
In other words, $\Phi_T:\R^p\rightrightarrows \R^q$ is the sublinear mapping that satisfies $\graph(\Phi_T) = T$. 
Let $\S(\Phi)\subseteq \T(\Phi)$ be the following smaller collection of tangent cones
\[
\S(\Phi):=\{T\in
\T(\Phi): \dom(\Phi_T) \text{ is a linear subspace}\}.
\]
The following example illustrates the sets $\T(\Phi)$ and 
$\S(\Phi)$ for the solution mappings $P_A$ and  $P_A^*$.

\begin{example}
\label{ex.tangent}
Suppose $A\in \R^{m\times n}$, and consider the solution mappings $P_A$ and  $P_A^*$ defined in~\eqref{eq.PA} and~\eqref{eq.QA}.
\begin{enumerate}\itemsep2pt
\item[(a)] Suppose $\Phi = P_{A}$. Then each member of  $\T(\Phi)$ is of the form $T_I := \{(Ax,x): x_I \ge 0\}$ for some $I\subseteq [n]$.  Furthermore, $T_I\in \S(\Phi)$ if and only if the following  Slater condition holds: $Ax = 0, x_I > 0$ for some $x\in \R^n$.

\item[(b)] Suppose $\Phi = P_A^*$. Then each member of $\T(\Phi)$ is of the form $T^I := \{(A\transp y +s,y): y\in\R^m, \; s_I\ge 0\}$ for some $I\subseteq [n]$.  Furthermore,  $T^I\in \S(\Phi)$ if and only if the following Slater condition holds: $(A\transp y)_I < 0$ for some $y\in \R^m$.
\end{enumerate}
\end{example}

 Define the {\em norm} of a sublinear mapping $\Phi:\R^p\rightrightarrows \R^q$ as follows
\begin{equation}\label{eq.norm.mapping}
\|\Phi\|=\max_{u\in\dom(\Phi) \atop \|u\|\le 1} \min_{v\in \Phi(u)}\|v\|.
\end{equation}
The upper adjoint $\Phi^*$ is a mapping between the dual spaces of $\R^q$ and $\R^p$.  According to~\eqref{eq.norm.mapping} the norm of $\Phi^*$ is as follows
\begin{equation}\label{eq.dual.norm}
\|\Phi^*\|^* = \max_{w\in \dom(\Phi^*)\atop \|w\|^* \le 1} \min_{z\in\Phi^*(w)}\|z\|^*.
\end{equation}

Observe that when $\Phi$ is a linear mapping,  the norm $\|\Phi\|$ defined via~\eqref{eq.norm.mapping} is precisely the usual operator norm of $\Phi$. For general sublinear mappings, we should note that the norm defined via~\eqref{eq.norm.mapping} is due to Robinson~\cite{Robi72} and is slightly different from the popular {\em inner} norm discussed and used in~\cite{Mord18,RockWets09}, namely,
\[
\|\Phi\|^-=\max_{u\in\R^p\atop\|u\|\le 1} \min_{v\in \Phi(u)}\|v\|.
\]
It is evident that $\|\Phi\| = \|\Phi\|^-$
 when $\dom(\Phi) = \R^p$.  On the other hand, when $\Phi:\R^p\rightrightarrows \R^q$ is a polyhedral sublinear mapping and $\dom(\Phi) \ne \R^p$, it holds that
 $\|\Phi\|^-= +\infty$ whereas  $\|\Phi\|< +\infty$.  The latter finiteness property is crucial for the characterization of Hoffman constants stated in Theorem~\ref{thm.Hoffman.char} below.  Theorem~\ref{thm.Hoffman.char} is a key foundational block for our developments.  It is an extension of previous results in~\cite{PenaVZ21}  tailored to the main purposes of this paper.

\medskip
\begin{theorem}\label{thm.Hoffman.char}
Let $\Phi:\R^p\rightrightarrows \R^q$ be a polyhedral mapping.
\begin{enumerate}
\item[(a)]  Suppose $\mathcal S \subseteq \T(\Phi)$ satisfies the following {\em bounding property:} For all $T\in \S(\Phi)$ there exists $T'\in \mathcal S$ such that $\|\Phi_T\|\le \|\Phi_{T'}\|.$ Then
\begin{equation}\label{cor.eq.hoffman.char}
 \H(\Phi)=\max_{T\in \mathcal S}
\|\Phi_T\|.
\end{equation}
In particular, if $\dom(\Phi)$ is a linear subspace then $\mathcal S := \{\graph(\Phi)\}$ satisfies the bounding property and thus
\begin{equation}\label{eq.hoffman.special}
 \H(\Phi)=\|\Phi\|.
\end{equation}
\item[(b)] For all $T\in \T(\Phi)$ it holds that $\H(\Phi_T) \le \H(\Phi)$.
\end{enumerate}
\end{theorem}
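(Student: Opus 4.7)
The plan is to establish part (b) first, then combine it with the prior characterization of Hoffman constants from~\cite{PenaVZ21} and the bounding property to obtain part (a).

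For part (b), I would fix $T = T_{(u_0,v_0)}(\Phi)$ and a pair $(w,z)$ with $w\in \dom(\Phi_T)$ and $z\in \Image(\Phi_T)\setminus \Phi_T(w)$. The key tool is a distance-scaling identity for polyhedra: since $\graph(\Phi)$ is polyhedral and locally coincides with the translate of its tangent cone at $(u_0,v_0)$, for all sufficiently small $t>0$ one has
\[
\dist\bigl(v_0 + tz,\, \Phi(u_0+tw)\bigr) = t\cdot\dist\bigl(z,\Phi_T(w)\bigr)
\quad\text{and}\quad
\dist\bigl(u_0+tw,\, \Phi^{-1}(v_0+tz)\bigr) = t\cdot\dist\bigl(w,\Phi_T^{-1}(z)\bigr).
\]
Applying the Hoffman bound $\dist(v_0+tz,\Phi(u_0+tw))\le \H(\Phi)\cdot\dist(u_0+tw,\Phi^{-1}(v_0+tz))$ at the shifted points and dividing by $t$ yields the analogous Hoffman bound for $\Phi_T$ with the same constant, giving $\H(\Phi_T)\le \H(\Phi)$.

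For part (a), I would invoke the main characterization from~\cite{PenaVZ21}, which gives $\H(\Phi) = \max_{T\in \S(\Phi)}\|\Phi_T\|$. For any $\mathcal S$ satisfying the bounding property, combining this identity with the bounding property yields $\H(\Phi) = \max_{T\in \S(\Phi)}\|\Phi_T\| \le \max_{T'\in \mathcal S}\|\Phi_{T'}\|$. For the reverse inequality, I would use part (b): for each $T'\in \mathcal S \subseteq \T(\Phi)$, one has $\H(\Phi_{T'}) \le \H(\Phi)$. Combining this with the elementary fact $\|\Psi\|\le \H(\Psi)$ for every sublinear polyhedral $\Psi$ (obtained by setting $v=0$ in the definition of $\H(\Psi)$ and noting $0\in \Psi^{-1}(0)$, so that $\dist(u,\Psi^{-1}(0))\le \|u\|$) yields $\|\Phi_{T'}\|\le \H(\Phi)$, whence $\max_{T'\in \mathcal S}\|\Phi_{T'}\| \le \H(\Phi)$. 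For the ``in particular'' case, when $\Phi$ is sublinear with $\dom(\Phi)$ a linear subspace one has $\graph(\Phi) = T_{(0,0)}(\Phi)\in \T(\Phi)$ and $\Phi_{\graph(\Phi)}=\Phi$; since $\graph(\Phi)$ is a convex cone, every tangent cone $T\in \T(\Phi)$ contains $\graph(\Phi)$, which gives $\Phi_T(w)\supseteq \Phi(w)$ and $\dom(\Phi_T)=\dom(\Phi)$, so that $\|\Phi_T\|\le \|\Phi\|$ and the bounding property holds with $\mathcal S=\{\graph(\Phi)\}$.

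The main obstacle is expected to be verifying the distance-scaling identities underlying part (b). The inequality $\dist(v_0+tz,\Phi(u_0+tw))\le t\,\dist(z,\Phi_T(w))$ follows directly from the inclusion $v_0+t\,\Phi_T(w)\subseteq \Phi(u_0+tw)$ valid for small $t$. The reverse direction relies on the fact that in a sufficiently small neighborhood of $(u_0,v_0)$ the polyhedron $\graph(\Phi)$ coincides with the translate of its tangent cone, which permits transferring near-optimal projections of the shifted point onto $\graph(\Phi)$ back to proportional projections of $(w,z)$ onto $T$. Quantifying ``sufficiently small'' using the finite face structure of $\graph(\Phi)$ at $(u_0,v_0)$ is where most of the careful bookkeeping will occur.
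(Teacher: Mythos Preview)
Your proposal is correct, but the route differs from the paper's in two places. For part~(b), the paper does not use any local distance-scaling argument; instead it invokes the standard polyhedral fact $\T(\Phi_T)\subseteq\T(\Phi)$ (tangent cones of tangent cones are tangent cones) together with the second identity $\H(\Phi)=\max_{T\in\T(\Phi)}\|\Phi_T\|$ from~\cite{PenaVZ21}, which immediately gives $\H(\Phi_T)=\max_{T'\in\T(\Phi_T)}\|\Phi_{T'}\|\le\max_{T'\in\T(\Phi)}\|\Phi_{T'}\|=\H(\Phi)$. For the upper bound in part~(a), the paper again uses that same second identity: since $\mathcal S\subseteq\T(\Phi)$, one has $\max_{T'\in\mathcal S}\|\Phi_{T'}\|\le\max_{T'\in\T(\Phi)}\|\Phi_{T'}\|=\H(\Phi)$ directly, with no detour through part~(b) or the inequality $\|\Psi\|\le\H(\Psi)$. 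So the paper leans on the full two-sided characterization $\H(\Phi)=\max_{T\in\S(\Phi)}\|\Phi_T\|=\max_{T\in\T(\Phi)}\|\Phi_T\|$ throughout, which makes both parts nearly one-liners; your argument for~(b) is more self-contained (it does not need the $\T(\Phi)$-side of the characterization) but requires the careful local-cone bookkeeping you anticipate. One small point: in your ``in particular'' clause you derive $\dom(\Phi_T)=\dom(\Phi)$ from $T\supseteq\graph(\Phi)$, but that inclusion only gives $\dom(\Phi_T)\supseteq\dom(\Phi)$; the reverse containment needs the separate (easy) observation that $\dom(\Phi_T)\subseteq T_{u_0}(\dom(\Phi))=\dom(\Phi)$ when $\dom(\Phi)$ is a linear subspace.
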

\begin{proof}
$\;$
\begin{enumerate}
\item[(a)] From~\cite[Theorem 1 and Lemma 1]{PenaVZ21} it follows that 
\begin{equation}\label{eq.hoffman.char}
 \H(\Phi)=\max_{T\in \S(\Phi)}
\|\Phi_T\|=\max_{T\in \mathcal \T(\Phi)} \|\Phi_T\|.
\end{equation}
The bounding property and~\eqref{eq.hoffman.char} imply that
\[
\H(\Phi) = \max_{T\in \S(\Phi)} \|\Phi_T\| \le \max_{T\in \mathcal S}
\|\Phi_T\| \le \max_{T\in \mathcal \T(\Phi)}\|\Phi_T\| = \H(\Phi).
\]
The first and last steps above follow from~\eqref{eq.hoffman.char}.  The first inequality follows from the bounding property of $\mathcal S$ and the second one follows from the assumption that $\mathcal S \subseteq \mathcal \T(\Phi)$.
Thus~\eqref{cor.eq.hoffman.char} follows.

When $\dom(\Phi)$ is a linear subspace, it readily follows that $\dom(\Phi) = \dom(\Phi_T)$ for all $T\in \T(\Phi)$ and hence $\mathcal S := \{\graph(\Phi)\}$ satisfies the bounding property.  The latter fact and~\eqref{cor.eq.hoffman.char} imply~\eqref{eq.hoffman.special}.
\item[(b)] Since $\graph(\Phi)$ is a polyhedron, it follows that $\T(\Phi_T) \subseteq \T(\Phi).$ Therefore
\[
\H(\Phi_T) = \max_{T'\in \T(\Phi_T)}\|\Phi_{T'}\| \le  \max_{T'\in \T(\Phi)}\|\Phi_{T'}\| = \H(\Phi).
\]
The first and last steps above follow from  the second identity in~\eqref{eq.hoffman.char} applied to $\Phi_T$ and $\Phi$. 
The inequality in the middle step holds because $\T(\Phi_T) \subseteq \T(\Phi).$
\end{enumerate}

\end{proof}

The following example illustrates Theorem~\ref{thm.Hoffman.char} for the solution mappings $P_A$ and $P_A^*$. This example relies on the description of the tangent cones detailed in Example~\ref{ex.tangent}.

\begin{example}\label{ex.char} Suppose $A\in \R^{m\times n}$, and consider the solution mappings $P_A$ and  $P_A^*$ defined in~\eqref{eq.PA} and~\eqref{eq.QA}.
\begin{enumerate}\itemsep2pt
\item[(a)] If the primal Slater condition $Ax=0, x > 0$ holds for some $x\in \R^n$ then $\dom(P_{A}) = A(\R^n) \subseteq \mathbb{R}^m$ is a linear subspace and thus Theorem~\ref{thm.Hoffman.char} implies that
\begin{equation}\label{eq.hoff.A}
\H(P_{A}) = \|P_{A}\| = \max_{z\in A(\R^n)\atop \|z\| \le 1} \min_{x\ge 0 \atop Ax = z} \|x\|. 
\end{equation}
Let  $B = \{z \in A(\R^n): \|z\| \le 1\}$ and $S := \{A x : x \ge 0, \|x\| \le 1\} \subseteq A(\R^n)$. Observe that
\[
\max_{z\in A(\R^n)\atop \|z\| \le 1} \min_{x\ge 0 \atop Ax = z} \|x\| = \min\{t: B\subseteq t S\} = \frac 1{\inf \{t: t B \not \subseteq S\}}.
\]
Hence~\eqref{eq.hoff.A} yields
\begin{equation}\label{eq.PA.char}
\begin{aligned}
\H(P_{A}) &=  \frac 1{\inf \{t: t B \not \subseteq S\}} \\&= \frac 1{\inf \{\|A y\|: Ay \notin \{Ax: x \ge 0, \|x\| \le 1\}\}}.
\end{aligned}
\end{equation}
The denominator in the last expression in~\eqref{eq.PA.char} is the radius of the largest ball in the space $A(\R^n)$ centered at the origin and contained in the set $\{Ax: x\ge 0, \|x\|\le 1\}$. The primal Slater condition ensures that this radius is strictly positive and thus $\H(P_A)$ is finite, as it should be.

More generally, let $\mathcal I$ denote the collection of maximal sets $I\subseteq [n]$ (with respect to inclusion) such that the Slater condition $Ax=0, x_I > 0$ holds for some $x\in \R^n$.  Then the collection of tangent cones $\{T_I: I\in \mathcal I\} \subseteq \graph(P_{A})$ where $T_I = \{(Ax,x): x_I \ge 0\}$ satisfies the bounding property and consequently
\[
\H(P_{A}) = \max_{I\in \mathcal I} \frac{1}{\inf\{\|Ay\|: Ay \not \in  \{Ax: x_I\ge 0, \|x\|\le 1\}\}}.
\]

\item[(b)] If the dual Slater condition $A\transp y < 0$ holds for some $y\in \R^m$ then $\dom(P_A^*) = \R^n$ is a linear subspace and thus Theorem~\ref{thm.Hoffman.char} implies that
\begin{equation}\label{eq.char.QA}
\begin{aligned}
\H(P_A^*) &= \|P_A^*\|^* 
\\&= \max_{z\in\R^n\atop \|z\|^* \le 1} \min_{A\transp y \le z} \|y\|^* 
\\&= \max_{z\in\R^n\atop \|z\|^* \le 1} \max_{x \ge 0 \atop \|Ax\| \le 1} -\ip{z}{x} 
\\&=  \max_{x \ge 0 \atop \|Ax\| \le 1} \max_{z\in\R^n\atop \|z\|^* \le 1}-\ip{z}{x} 
\\&= \max\{\|x\|: x \ge 0, \|Ax\| \le 1\}.
\end{aligned}
\end{equation}
The third step in~\eqref{eq.char.QA} follows by Fenchel duality and the last step follows from the following duality property of norms
\[
\|x\| = \max_{\|y\|^*\le 1} \ip{y}{x}.
\]  
The last expression in~\eqref{eq.char.QA} has the following evident geometric interpretation: It is the largest element in the set $\{x \in \R^n: x \ge 0, \|Ax\| \le 1\}$.  Observe that this quantity is finite and thus so is $\H(P_A^*)$, as it should be, because the dual Slater condition implies that $Ax\ne 0$ for any $x\ge 0, \, x\ne 0$.

More generally, let $\mathcal I$ be the collection of maximal sets $I\subseteq [n]$ (with respect to inclusion) such that the Slater condition $(A\transp y)_I < 0$ holds for some $y\in \R^m$.  Then the collection of tangent cones $\{T^I: I\in \mathcal I\} \subseteq \graph(P_A^*)$ where $T^I = \{(A\transp y+s,y): s_I \ge 0\}$ satisfies the bounding property and consequently
\[
\H(P_A^*) =
 \max_{I\in \mathcal I} \; \max\{\|x\|: x \in \R^I, \, x \ge 0, \|A_Ix\| \le 1\}.
\]

\end{enumerate}
\end{example}

\medskip

In the special case where $\Phi:\R^p\rightarrow \R^q$ is a linear mapping, it is well known that $\|\Phi\| = \|\Phi^*\|^*$.  Furthermore, in this special case it also holds that $\H(\Phi) =\H(\Phi^*)$
since Theorem~\ref{thm.Hoffman.char} implies that
$\H(\Phi) = \|\Phi\|$ and $\H(\Phi^*) = \|\Phi^*\|^*$. 
Although neither $\|\Phi\| = \|\Phi^*\|^*$ nor $\H(\Phi) =\H(\Phi^*)$ necessarily holds when $\Phi:\R^p\rightrightarrows \R^q$ is a general polyhedral sublinear mapping,  the following theorem shows that a suitable combination of these two identities, namely $\|\Phi\|\le \H(\Phi^*)$, holds.
As we show later, Theorem~\ref{thm.dual.ineq} automatically unveils several interesting duality properties of Hoffman constants.

\begin{theorem}[Hoffman duality inequality]\label{thm.dual.ineq} Suppose $\Phi:\R^p\rightrightarrows \R^q$ is a polyhedral sublinear mapping.  Then
\[
\|\Phi\| \le\H(\Phi^*).
\]
In particular,  if $\dom(\Phi)$ is a linear subspace then
\[
\H(\Phi) \le \H(\Phi^*).
\]
\end{theorem}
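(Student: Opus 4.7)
The plan is to combine a duality formula for $\min_{v \in \Phi(u)}\|v\|$ with the Hoffman bound for $\Phi^*$ applied at the base point $w=0$. First, I would dualize the primal quantity defining $\|\Phi\|$. For $u \in \dom(\Phi)$, the standard identity $\dist(0,C)=\sup_{\|w\|^*\le 1}\inf_{v\in C}\langle w,v\rangle$ applied to the closed convex polyhedron $C=\Phi(u)$ gives $\min_{v\in \Phi(u)}\|v\|=\sup_{\|w\|^*\le 1}\inf_{v\in\Phi(u)}\langle w,v\rangle$. For each fixed $w$, the inner problem is a linear program over the polyhedral slice $\{v:(u,v)\in\graph(\Phi)\}$. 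Dualizing the equality $u'=u$ with a multiplier $z$ and using the graph characterization~\eqref{eq.upperAdj.graph} of $\Phi^*$, the dual becomes $\sup_{z\in\Phi^*(w)}\langle z,u\rangle$, and strong duality holds by polyhedrality. Combined, one obtains
\[
\min_{v \in \Phi(u)} \|v\| = \sup_{\|w\|^*\le 1,\; z \in \Phi^*(w)} \langle z,u\rangle.
\]

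The key step is to apply the Hoffman bound for $\Phi^*$ at $w=0$. Since $\Phi^*$ is sublinear, $0\in\Phi^*(0)$, hence $0\in\dom(\Phi^*)$ and $0\in\Image(\Phi^*)$. So for every $z\in\Image(\Phi^*)$,
\[
\dist^*(z,\Phi^*(0))\le \H(\Phi^*)\cdot\dist^*\bigl(0,(\Phi^*)^{-1}(z)\bigr).
\]
If $z\in\Phi^*(w)$ with $\|w\|^*\le 1$, then $w\in(\Phi^*)^{-1}(z)$, so the right-hand side is at most $\H(\Phi^*)$. Since $\Phi^*(0)$ is closed polyhedral, there exists $z^*\in\Phi^*(0)$ with $\|z-z^*\|^*\le \H(\Phi^*)$.

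Finally, for $u\in\dom(\Phi)$ with $\|u\|\le 1$, $\|w\|^*\le 1$, and $z\in\Phi^*(w)$, I would split
\[
\langle z,u\rangle \;=\; \langle z-z^*, u\rangle + \langle z^*,u\rangle \;\le\; \|z-z^*\|^*\,\|u\|+0 \;\le\; \H(\Phi^*),
\]
where $\langle z^*,u\rangle\le 0$ because $z^*\in\Phi^*(0)$ unpacks by definition to $\langle z^*,u'\rangle\le 0$ for every $u'\in\dom(\Phi)$. Taking the supremum over admissible $u,w,z$ in the identity from the first step yields $\|\Phi\|\le\H(\Phi^*)$; when $\dom(\Phi)$ is a linear subspace, Theorem~\ref{thm.Hoffman.char}(a) gives $\H(\Phi)=\|\Phi\|$, and the second inequality is immediate. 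The main obstacle is verifying the strong LP duality step for the set-valued polyhedral mapping $\Phi$, but this reduces to classical polyhedral LP duality applied to each slice of $\graph(\Phi)$ and poses no genuine difficulty; the genuinely clever ingredient is the choice $w=0$ when invoking the Hoffman bound for $\Phi^*$.
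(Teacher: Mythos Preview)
Your proof is correct and takes a genuinely different route from the paper's. Both arguments begin with the same duality identity
\[
\min_{v\in\Phi(u)}\|v\|=\sup_{\|w\|^*\le 1,\; z\in\Phi^*(w)}\langle z,u\rangle,
\]
obtained via minimax and polyhedral LP duality. The paper then proceeds \emph{constructively}: it picks a maximizer $(\bar w,\bar z)$ of the right-hand side, builds the tangent cone $T=T_{(\bar w,\bar z)}(\Phi^*)\in\T(\Phi^*)$, and shows $\|\Phi^*_T\|^*\ge\|\Phi\|$ by evaluating at $\hat w=-\bar w$; the conclusion then comes through the tangent-cone characterization of Theorem~\ref{thm.Hoffman.char}. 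You instead bypass tangent cones entirely for the main inequality: you invoke the Hoffman bound for $\Phi^*$ directly at $w=0$, use that any $z\in\Phi^*(w)$ with $\|w\|^*\le1$ has $\dist^*(z,\Phi^*(0))\le\H(\Phi^*)$, and finish with the splitting $\langle z,u\rangle=\langle z-z^*,u\rangle+\langle z^*,u\rangle$ together with the observation that $\Phi^*(0)=-\dom(\Phi)^*$ forces $\langle z^*,u\rangle\le0$. Your argument is shorter and needs Theorem~\ref{thm.Hoffman.char} only for the ``in particular'' clause, whereas the paper's argument yields an explicit tangent-cone witness $T\in\T(\Phi^*)$ achieving $\|\Phi^*_T\|^*\ge\|\Phi\|$, which is a slightly sharper structural statement.
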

\begin{proof} 
By Theorem~\ref{thm.Hoffman.char}, it suffices to show that there exists $T\in \T(\Phi^*)$ such that $\|\Phi^*_T\|^* \ge \|\Phi\|$.  To that end, let $\bar u\in \dom(\Phi)$ with $\|\bar u\|=1$ be such that 
\[
\|\Phi\| = \min_{v\in \Phi(\bar u)}\|v\|.
\]
The existence of such $\bar u$ follows from~\eqref{eq.norm.mapping}.
We will show that there exists $T\in \T(\Phi^*)$ and $\hat w \in \dom(\Phi^*_T)$ with $\|\hat w\|^*\le 1$ such that
\begin{equation}\label{eq.to.finish}
z\in (\Phi^*_T)(\hat w) \Rightarrow \|z\|^* \ge \min_{v\in \Phi(\bar u)}\|v\|.
\end{equation}
This will finish the proof because~\eqref{eq.norm.mapping} and~\eqref{eq.to.finish} imply that 
\[
\|\Phi^*_T\|^* = \max_{w\in \dom(\Phi_T^*)\atop \|w\|^*\le 1}\min_{z\in (\Phi^*_T)(w)} \|z\|^* \ge \min_{z\in (\Phi^*_T)(\hat w)} \|z\|^* \ge \min_{v\in \Phi(\bar u)}\|v\| = \|\Phi\|.
\]
We next construct $T\in\T(\Phi^*)$ and $\hat w\in \dom(\Phi^*_T)$ with $\|\hat w\|^*\le1$ such that~\eqref{eq.to.finish} holds.  
To that end, first observe that 
\begin{equation}\label{eq.step.1}
\min_{v\in \Phi(\bar u)}\|v\| = \min_{v\in \Phi(\bar u)} \max_{\|w\|^* \le 1} \ip{w}{v}.
\end{equation}  
Second, apply Sion's minimax theorem~\cite{Sion58} to get
\begin{equation}\label{eq.step.2}
\min_{v\in \Phi(\bar u)} \max_{\|w\|^* \le 1} \ip{w}{v} = \max_{\|w\|^* \le 1} \min_{v\in \Phi(\bar u)} \ip{w}{v}.
\end{equation}
Third, since $\Phi$ is a polyhedral sublinear mapping, 
$K:=\graph(\Phi)$ is a polyhedral convex cone and thus convex duality implies that for each $w\in \R^n$
\begin{equation}\label{eq.step.3}
\min_{v\in \Phi(\bar u)} \ip{w}{v} = \min_{v \in \R^q\atop (\bar u,v)\in K} \ip{w}{v} = \max_{z\in \R^p\atop (-z,w)\in K^*} \ip{z}{\bar u} 
 = \max_{z\in \Phi^*(w)} \ip{z}{\bar u}.
\end{equation}
The first step in~\eqref{eq.step.3} follows from~\eqref{eq:graphdef} and the third one follows from~\eqref{eq.upperAdj.graph}. For the second step, notice that 
\[
\min_{v \in \R^q\atop (\bar u,v)\in K} \ip{w}{v} = 
 \min \left \{ \left \langle \matr{ 0 \\w}, 
\matr{u \\v}
\right \rangle:
\matr{ I & 0} \matr{ u \\v} = \bar u, \matr {u \\v } \in K \right \},
\]
whose conic dual, after associating the dual variables $z \in \R^p$ with the constraints $Iu + 0v = \bar u$, is
\[
\max \left \{ \ip{z}{\bar u}: \matr {0 \\w } - \matr{ I \\ 0 } z \in K^* \right \} = \max_{z\in \R^p\atop (-z,w)\in K^*} \ip{z}{\bar u}.
\]
Since $K$ is polyhedral, the second step in~\eqref{eq.step.3} follows from strong duality.

Putting together~\eqref{eq.step.1},~\eqref{eq.step.2}, and~\eqref{eq.step.3} we get
\begin{equation}\label{eq.pair}
\min_{v\in \Phi(\bar u)}\|v\| =\min_{v\in \Phi(\bar u)} \max_{\|w\|^* \le 1} \ip{w}{v}
 =  \max_{\|w\|^* \le 1\atop z\in \Phi^*(w)} \ip{z}{\bar u}.
\end{equation}
Let $\bar v$ and $(\bar z,\bar w)$ be optimal solutions to the left-most and right-most problems in~\eqref{eq.pair}. Then $\bar v \in \Phi(\bar u),\; \bar z\in \Phi^*(\bar w),$ and 
\begin{equation}\label{eq.orthogonal}
\|\bar v\| = \ip{\bar w}{\bar v} = \ip{\bar z}{\bar u}.
\end{equation}
Let $T:=T_{(\bar w,\bar z)}(\Phi^*) \in \T(\Phi^*)$.  Observe that 
$(-\bar w,-\bar z)\in T$.  In addition, for all $(-\bar w,z) \in T$ we have $\bar z + t z \in \Phi^*(\bar w-t\bar w)$ for $t > 0$ sufficiently small and hence the construction of the upper adjoint $\Phi^*$ and~\eqref{eq.orthogonal} yield
\[
\ip{\bar z + tz}{\bar u} \le \ip{\bar w - t\bar w}{\bar v} \Rightarrow \ip{z}{\bar u} \le -\ip{\bar w}{\bar v}= -\|\bar v\|. 
\]
In other words, if $z\in \Phi_T^*(-\bar w)$ then $\ip{z}{\bar u} \le -\|\bar v\|$.
Thus $\hat w:=-\bar w \in \dom(\Phi_T^*)$ satisfies $\|\hat w\|^* = \|\bar w\|^*\le 1$, and for all $z\in \Phi_T^*(\hat w)$ we have
\[
\|z\|^* \ge -\ip{z}{\bar u} \ge \|\bar v\| = \min_{v\in \Phi(\bar u)}\|v\|. 
\]
Therefore~\eqref{eq.to.finish} holds.
\end{proof}
Applying Theorem~\ref{thm.dual.ineq} to the mappings $P_{A,R,S}$ and $P_{A\transp,S^*,-R^*}$ we obtain the following relationship between the Hoffman constants of the primal-dual pair of feasibility problems~\eqref{eq.onesided.systems.general} and~\eqref{eq.onesided.systems.general.dual}.  
\begin{corollary}\label{cor.onesided.dual}
 Suppose $R\subseteq \R^n, S\subseteq \R^m$ are polyhedral cones and $A\in \R^{m\times n}$.  Consider the solution mappings $P_{A,R,S}:\R^m \rightrightarrows \R^n$ and  $P_{A\transp,S^*,-R^*}:\R^n\rightrightarrows \R^m$  defined via~\eqref{eq.PA.gral} and~\eqref{eq.QA.gral}.
\begin{enumerate}[label = (\alph*)]
\item If the primal Slater condition $Ax \in \ri(S), x\in \ri(R)$ holds for some $x\in \R^n$ then $\H(P_{A,R,S}) \le \H(P_{A\transp,S^*,-R^*})$.
\label{it:itema}
\item If the dual Slater condition $-A\transp y \in \ri(R^*), \; y\in \ri(S^*)$ holds for some $y\in \R^m$ then $\H(P_{A\transp,S^*,-R^*})\le \H(P_{A,R,S})$.
\label{it:itemb}
\end{enumerate}
\end{corollary}

Corollary~\ref{cor.onesided.dual} is an immediate consequence of Theorem~\ref{thm.dual.ineq} and the following lemma.

\begin{lemma}\label{lemma.slater} 
Suppose $R\subseteq \R^n, S\subseteq \R^m$ are polyhedral cones and $A\in \R^{m\times n}$. Let $P_{A,R,S}:\R^m \rightrightarrows \R^n$ be the solution mapping defined via~\eqref{eq.PA.gral}. Then $\dom(P_{A,R,S})\subseteq \R^m$ is a linear subspace if and only if the primal Slater condition $Ax \in \ri(S), x\in \ri(R)$ holds for some $x\in \R^n$. 
\end{lemma}
\begin{proof}
Since $\dom(P_{A,R,S}) = A(R) - S$, we need to show that 
\[
A(R) - S \text{ is a linear subspace } \Leftrightarrow \;  A x \in \ri(S) \text{ for some }  x\in \ri(R).
\]
First we show ``$\Rightarrow$''. To that end, pick $x_0\in \ri(R), s_0\in \ri(S)$ and let $z_0 = Ax_0 - s_0 \in A(R)-S$.  Since the latter set is a linear subspace, we also have $-z_0 \in A(R)-S$, that is, $-z_0 = A\hat x - \hat s$ for some $\hat x\in R$ and $\hat s \in S$.  Therefore $ x := \hat x+x_0 \in \ri(R)$ and $A x = A\hat x+Ax_0 = (\hat s -z_0) + (z_0+s_0) = \hat s+ s_0 \in \ri(S).$

Next we show ``$\Leftarrow$''.  Suppose $ x\in \ri(R)$ is such that $ s:=A x \in \ri(S)$.  Then for any $\bar x\in R-R$ and $\bar s \in S-S$ both $\bar x + t  x\in R$ and $\bar s + t  s\in S$ provided $t > 0$ is large enough, and so $A\bar x - \bar s = A(\bar x + t  x) - (\bar s+t s) \in A(R)-S$.  Since this holds for any $\bar x\in R-R$ and $\bar s \in S-S$, it follows that
$ A(R-R) - (S-S) = A(R) - S$. Thus $A(R)-S$ is a linear subspace because $A(R-R) - (S-S)$ is evidently a linear subspace.
\end{proof}

It is easy to see that the primal and dual Slater conditions in parts (a) and (b) of Corollary~\ref{cor.onesided.dual} cannot both hold simultaneously.    Therefore, there is an inherent dual asymmetry between the Hoffman constants of the  feasibility problems~\eqref{eq.onesided.systems.general} and~\eqref{eq.onesided.systems.general.dual}.  Interestingly, in Section~\ref{sec.box} we show that there is a perfect symmetry between the Hoffman constants of the box-constrained feasibility problems~\eqref{eq.box-constrained} and~\eqref{eq.box-constrained.dual}.

The two examples below show that the right-hand side of each of the inequalities in Corollary~\ref{cor.onesided.dual} can be arbitrarily larger than the left-hand side.  For ease of computation, in the following examples
we let $S = \R^m$ and $R = \R^n_+$, and assume that $\R^m$ and $\R^n$ are endowed with the $\ell_2$-norm and the $\ell_1$-norm respectively and thus the dual spaces 
$(\R^m)^*$ and $(\R^n)^*$ are endowed with the $\ell_2$-norm and the $\ell_\infty$-norm respectively.

\begin{example}\label{ex.primal} Suppose $m=2, n = 4, \theta \in (0,\pi/6)$, 
\[
A = \matr{0 & 0 & \cos(\theta) & -1 \\ 
1&-1&-\sin(\theta)&0},
\]
and
$
\bar x := \matr{1+\sin(\theta)&1 & 1& \cos(\theta)}\transp.
$
It is easy to check that $A\bar x=0$,  and $\bar x>0$. Hence the primal Slater condition in Corollary~\ref{cor.onesided.dual}\ref{it:itema}
holds at $\bar x$. 
  
Thus identity~\eqref{eq.hoffman.special} in Theorem~\ref{thm.Hoffman.char} implies that 
$\H(P_A) = \|P_A\|$.  In addition, the choice of norms and identity~\eqref{eq.PA.char} in Example~\ref{ex.char} imply that $1/\|P_A\|$ is the Euclidean distance from the origin to the boundary of the convex hull of the columns of $A$.  This is the distance from the origin to the middle point of the segment joining $(0,1)$ and $(\cos(\theta),-\sin(\theta))$, that is,
\[
\sin\left(\frac{\pi-(\pi/2+\theta)}{2}\right) = \sin\left(\frac{\pi}{4}-\frac{\theta}{2}\right) \ge \sin\left(\frac{\pi}{6}\right).
\]
Thus we have $\H(P_A) = \|P_A\| \le 1/\sin(\pi/6)$.
On the other hand, $\H(P_A^*)\ge 1/\sin(\theta)$ because for $c = \matr{1/\sin(\theta)&0&-1&0}\transp$ and $v = \matr{0&0}\transp$ we have $\dist^*(c-A\transp v, \R^n_+) =\|(A\transp v - c)_+\|_\infty = 1$ and 
$y\in P_A^*(c) \Rightarrow y = \matr{0 & 1/\sin(\theta)}\transp.$ Thus
\[
y\in P_A^*(c) \Rightarrow \|v-y\|_2  = \|y\|_2 = y_2 \ge \frac{1}{\sin(\theta)}
\]
and so
\[
\dist(v,P_A^*(c)) \ge \frac{1}{\sin(\theta)} = \frac{1}{\sin(\theta)} \cdot \dist^*(c-A\transp v, \R^n_+).
\]
Since this holds for any $\theta \in (0,\pi/6)$, it follows that $\H(P_A^*)$ can be arbitrarily larger than $\H(P_A)$ by choosing $\theta$ sufficiently small.
\end{example}

\begin{example}\label{ex.dual} Suppose $m=2, n = 3, \phi \in (0,\pi/6)$, 
\[
A = \matr{\sin(\phi)&0 & 1 \\ 
1&1&0},
\]
and $\bar y = -\matr{1& 1}\transp$. It is easy to check that $A\transp \bar y<0$. Hence the dual Slater condition in Corollary~\ref{cor.onesided.dual}\ref{it:itemb}  holds at $\bar y$.

Thus 
identity~\eqref{eq.hoffman.special} in Theorem~\ref{thm.Hoffman.char} implies that 
\[
\H(P_A^*) = 
 \|P_A^*\|^* = \max_{\|z\|_\infty \le 1} \min_{A\transp y \le z} \|y\|_2.
\]  
The choice of norms and $A$ imply that the solution to this max-min problem  is attained at $z = -\matr{1 & 1 & 1}\transp $ and $y = -\matr{1 & 1}\transp$. Hence
\[
 \|P_A^*\|^* = \left\|-\matr{1\\1} \right\|_2 = \sqrt{2}.
\]
Thus we have $\H(P_A^*) = \sqrt{2}.$ On the other hand, $\H(P_A)\ge 2/\sin(\phi)$ because for $b = \matr{0&1}\transp$ and $u = \matr{1&0&0}\transp$ we have $\|Au-b\|_2 = \sin(\phi)$ and $x\in P_A(b)\Rightarrow x = \matr{0&1&0}\transp$.  Thus
\[
x\in P_A(b) \Rightarrow \|x-u\|_1 = 2
\]
and so
\[
\dist(u,P_A(b))= 2 = \frac{2}{\sin(\phi)} \cdot \|Au-b\|_2.
\]
Since this holds for any $\phi \in (0,\pi/6)$, it follows that $\H(P_A)$ can be arbitrarily larger than $\H(P_A^*)$ by choosing $\phi$ sufficiently small.
\end{example}

The following example, which is a concatenation of the above two examples, illustrates the crucial role the Slater condition plays in the duality inequality in Corollary~\ref{cor.onesided.dual}.  Example~\ref{ex.both} illustrates that when neither the primal nor dual Slater conditions hold, it can be quite challenging to determine the direction of the inequality between the Hoffman constants $\H(P_A)$ and $\H(P_A^*)$.  

\begin{example}\label{ex.both}
Suppose $m=4, n = 7, \theta\in (0,\pi/6) ,\phi \in (0,\pi/6)$ and 
\[
A = \matr{0 & 0 & \cos(\theta) & -1&0&0&0 \\ 
1&-1&-\sin(\theta)&0&0&0&0\\
0&0&0&0&\sin(\phi)&0 & 1 \\ 
0&0&0&0&1&1&0}.
\]
In this case the primal Slater condition $Ax=0, x>0$ does not hold for any $x\in \R^n$ because of the last row of $A$. The dual Slater condition $A\transp y < 0$ does not hold for any $y\in \R^m$ either because of the first two columns of $A$.  Let $A^1$ and $A^{2}$ denote respectively the upper-left $2 \times 4$ and lower-right $2\times 3$ submatrices of $A$.  The choice of norms in $\R^n$ and $\R^m$, the triangle inequality, and the block-diagonal structure of $A$ imply both
\[
\max\{\H(P_{A^1}),\H(P_{A^{2}})\} \le \H(P_{A}) \le \H(P_{A^1}) + \H(P_{A^{2}}),
\]
and
\[
\max\{\H(P_{A^1}^*),\H(P_{A^{2}}^*)\} \le \H(P_{A}^*) \le \H(P_{A^1}^*) + \H(P_{A^{2}}^*).
\]
Thus by proceeding as in Example~\ref{ex.primal} and Example~\ref{ex.dual},  
it follows that $\H(P_A) \ll \H(P_A^*)$ when $0 < \theta \ll \phi < \pi/6$ and $\H(P_A^*) \ll \H(P_A)$ when $0<\phi \ll \theta< \pi/6$.  

We should note that in this example, the task of determining the direction of the inequality between $\H(P_A)$ and $\H(P_A^*)$ is possible thanks to its particularly simple block-diagonal structure. This task would generally be far more challenging when neither the primal nor dual Slater conditions hold.
\end{example} 

\section{Box-constrained feasibility problems}
\label{sec.box}
This section details an interesting application of 
Theorem~\ref{thm.Hoffman.char} and Theorem~\ref{thm.dual.ineq} to a non-conic context.  We show that the Hoffman constants of the box-constrained feasibility problems~\eqref{eq.box-constrained} and~\eqref{eq.box-constrained.dual} are identical.  We also establish a striking identity between these Hoffman constants and the chi condition measures~\cite{Olea90,Stew89} for suitable choices of $A$ and $L$. 

\begin{theorem}\label{thm.box.dual} 
Suppose $A\in \R^{m\times n}$, $L\subseteq \R^n$ is a linear subspace, and $\B:=\{x\in \R^n: \ell \le x \le u\}$ for some $\ell,u\in \R^n$ with $\ell < u.$ Let $P_{A,\B,L}$ and $P_{A\transp,L^\perp,-\B}$ be the solution mappings to the box-constrained 
systems of linear inequalities~\eqref{eq.box-constrained} and~\eqref{eq.box-constrained.dual} respectively.  Then $\H(P_{A,\B,L}) = \H(P_{A\transp,L^\perp,-\B})$.
\end{theorem}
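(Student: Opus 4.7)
My strategy is to use Theorem~\ref{thm.Hoffman.char} to reduce both Hoffman constants to maxima of sublinear tangent-map norms indexed by \emph{partitions} of $[n]$, and then match these maxima via the Hoffman duality inequality in Theorem~\ref{thm.dual.ineq}. At a feasible point $(b_0,x_0)\in\graph(P_{A,\B,L})$, the active-bound sets $I_-:=\{i:x_{0,i}=\ell_i\}$ and $I_+:=\{i:x_{0,i}=u_i\}$ determine the tangent cone, yielding the polyhedral sublinear tangent mapping $P_{A,C_{I_-,I_+},L}$, where $C_{I_-,I_+}:=\{z\in\R^n:z_{I_-}\ge 0,\,z_{I_+}\le 0\}$. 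Analogously, tangent maps of $P_{A\transp,L^\perp,-\B}$ take the form $P_{A\transp,L^\perp,-D^{J_-,J_+}}$ with $D^{J_-,J_+}:=\{v\in\R^n:v_{J_-}\ge 0,\,v_{J_+}\le 0\}$. The identity~\eqref{eq.hoffman.char} used in the proof of Theorem~\ref{thm.Hoffman.char}(a) then expresses each Hoffman constant as the maximum of $\|\cdot\|$ over the corresponding family of tangent sublinear maps.

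The crucial \emph{partition reduction} is to show that one may restrict these maxima to disjoint pairs with $I_-\cup I_+=[n]$. Given any disjoint $(I_-,I_+)$ with $I_0:=[n]\setminus(I_-\cup I_+)\ne\emptyset$, pick $b$ attaining $\|P_{A,C_{I_-,I_+},L}\|$ and a minimum-norm feasible solution $z^*$, and define the refinement $I_-':=I_-\cup\{i\in I_0:z^*_i\ge 0\}$, $I_+':=[n]\setminus I_-'$. Since $C_{I_-',I_+'}\subseteq C_{I_-,I_+}$ and $z^*\in C_{I_-',I_+'}$, the refined minimum norm at $b$ is exactly $\|z^*\|$, yielding $\|P_{A,C_{I_-',I_+'},L}\|\ge\|z^*\|=\|P_{A,C_{I_-,I_+},L}\|$. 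The same reduction applies on the dual side.

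For each partition $(I_-,I_+)$ of $[n]$ we have $I_0=\emptyset$ and thus $C_{I_-,I_+}^*=D^{I_-,I_+}$, so Example~\ref{prop.adjoint} gives $(P_{A,C_{I_-,I_+},L})^*=P_{A\transp,L^\perp,-D^{I_-,I_+}}$. Theorem~\ref{thm.dual.ineq} then delivers
\[
\|P_{A,C_{I_-,I_+},L}\|\le\H(P_{A\transp,L^\perp,-D^{I_-,I_+}}).
\]
Choosing $y_0=0\in L^\perp$ and $c_0\in\B$ with $c_{0,i}=\ell_i$ for $i\in I_-$ and $c_{0,i}=u_i$ for $i\in I_+$ produces a feasible point of $P_{A\transp,L^\perp,-\B}$ whose tangent sublinear map is exactly $P_{A\transp,L^\perp,-D^{I_-,I_+}}$; Theorem~\ref{thm.Hoffman.char}(b) thus gives $\H(P_{A\transp,L^\perp,-D^{I_-,I_+}})\le\H(P_{A\transp,L^\perp,-\B})$. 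Taking the max over partitions yields $\H(P_{A,\B,L})\le\H(P_{A\transp,L^\perp,-\B})$. A symmetric argument (apply Theorem~\ref{thm.dual.ineq} to each dual partition tangent map and identify its upper adjoint as a primal partition tangent map) establishes the reverse inequality, and hence the claimed identity.

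The main obstacle is the partition reduction. A direct application of Theorem~\ref{thm.dual.ineq} to a non-partition tangent map $P_{A,C_{I_-,I_+},L}$ yields an adjoint involving the cone $C_{I_-,I_+}^*=\{v:v_{I_-}\ge 0,\,v_{I_+}\le 0,\,v_{I_0}=0\}$, whose equality constraints on $I_0$ do \emph{not} appear in any tangent cone of the box-constrained dual problem, precluding a direct invocation of Theorem~\ref{thm.Hoffman.char}(b). Replacing $(I_-,I_+)$ by its sign-refined partition exactly eliminates this mismatch and lets the remainder of the argument go through cleanly.
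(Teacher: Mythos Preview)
Your proposal is correct and follows essentially the same route as the paper's proof: both arguments identify the tangent cones of $\graph(P_{A,\B,L})$ and $\graph(P_{A\transp,L^\perp,-\B})$ as indexed by disjoint pairs $(I_-,I_+)\subseteq[n]$, perform the same sign-refinement reduction to full partitions (your $I_-'$ is exactly the paper's $I'$), invoke Example~\ref{prop.adjoint} to recognize each partition tangent map as the upper adjoint of the corresponding one on the other side, and then apply Theorem~\ref{thm.dual.ineq} together with Theorem~\ref{thm.Hoffman.char}(b) to close the two inequalities. The only cosmetic difference is that the paper packages the partition reduction and the Theorem~\ref{thm.Hoffman.char}(b) step into the two-sided identities $\H(\Phi)=\max_I\|\Phi_I\|=\max_I\H(\Phi_I)$ and $\H(\Psi)=\max_I\|\Psi_I\|^*=\max_I\H(\Psi_I)$ up front, whereas you unroll these steps inline.
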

\begin{proof}

To ease notation,  let $\Phi:=P_{A,\B,L}$ and $\Psi:=P_{A\transp,L^\perp,-\B}$ throughout this proof. Before proving the identity $\H(\Phi) = \H(\Psi)$, we establish the convenient expressions \eqref{eq.HP} and~\eqref{eq.HQ} below for $\H(\Phi)$ and $\H(\Psi)$.  

Observe that $\graph(\Phi) = \{(Ax-s,x): x\in \B, s\in L\}$. Thus it follows that every $T\in \T(\Phi)$ is of the form $T = T_{I,J} = \{(Ax-s,x): x_I \ge 0, x_J \le 0, s\in L\}$ for some $I,J\subseteq [n]$ with $I\cap J = \emptyset$.   For simplicity, we shall write 
$\Phi_{I,J}$ as shorthand for $\Phi_{T_{I,J}}$ and
$\Phi_I$ as shorthand for $\Phi_{T_{I,I^c}}$.

We claim that the collection $\mathcal S:=\{T_{I,I^c}: I\subseteq[n]\}$ satisfies the bounding property and thus Theorem~\ref{thm.Hoffman.char} implies that 
\begin{equation}\label{eq.HP}
\H(\Phi)  =  \max_{I\subseteq[n]} \|\Phi_I\| = \max_{I\subseteq[n]} \;\H(\Phi_I).
\end{equation}
We next prove the above claim that $\mathcal S= \{T_{I,I^c}: I\subseteq[n]\}$ satisfies the bounding property.  To that end, suppose $I,J\subseteq [n]$ with $I\cap J = \emptyset$ and let $b\in \dom(\Phi_{I,J})$ be such that $\|b\|=1$ and 
\[
\|\Phi_{I,J}\|= \min_{x\in \Phi_{I,J}(b)} \|x\| = \min\{\|x\|: Ax-b\in L, x_I\ge 0, x_J\le 0\}.
\]
Suppose $\bar x$ is a minimizer of the last expression and let
\[
I':=I \cup\{i: \bar x_i > 0\}.
\]
This choice of $I'$ guarantees that $b\in \dom(\Phi_{I'})$ because 
$(b,\bar x) \in T_{I',(I')^c}$. The choice of $I'$ also guarantees
that $I\subseteq I'$ and $J\subseteq (I')^c$ because if $j\in J$ then $j\not \in I$ and $\bar x_j \le 0$ so $j\not \in I'$.
Therefore $T_{I',(I')^c} \subseteq T_{I,J}$ and 
\[
\|\Phi_{I'}\|\ge \min_{x\in \Phi_{I'}(b)} \|x\| \ge \min_{x\in \Phi_{I,J}(b)} \|x\| = \|\Phi_{I,J}\|.
\]
Since this holds for all $I,J\subseteq [n]$ with $I\cap J = \emptyset$, it follows that $\mathcal S=\{T_{I,I^c}: I\subseteq[n]\}$ satisfies the bounding property.

We can proceed in a similar fashion for $\Psi$.  Observe that $\graph(\Psi) = \{(A\transp y + s,y): y\in L^\perp, s\in \B\}$ and thus it follows that every $K\in \T(\Psi)$ is of the form $K = K_{I,J} = \{(A\transp y + s,y): y\in L^\perp, s_I \ge 0, s_J \le 0\}$ for some $I,J\subseteq [n]$ with $I\cap J = \emptyset$.   For simplicity, we shall write 
$\Psi_I$ as shorthand for $\Psi_{K_{I,I^c}}$.

Again the collection $\tilde{\mathcal S}:=\{K_{I,I^c}: I\subseteq[n]\}$ satisfies the bounding property and thus Theorem~\ref{thm.Hoffman.char} implies that 
\begin{equation}\label{eq.HQ}
\H(\Psi) =  \max_{I\subseteq[n] } \|\Psi_I\|^* = \max_{I\subseteq[n] } \;  \H(\Psi_I).
\end{equation}
Next, observe that for each $I \subseteq [n]$ the mappings $\Phi_I$ and $\Psi_I$ can be defined via
\begin{align*}
b&\mapsto \Phi_I(b) = \{x\in \R^n: Ax - b\in L, x_I \ge 0, x_{I^c}\le 0\} 
\\
c&\mapsto \Psi_I(c) = \{y\in \R^m: A\transp y + s = c, y\in L^\perp, s_I \ge 0, s_{I^c}\le 0\}. 
\end{align*}
Thus by applying Example~\ref{prop.adjoint} with $S=L$ and $R=\{x\in\R^n: x_I\ge 0, \, x_{I^c}\le 0\}$, it follows that $\Phi_I^* = \Psi_I$ for all 
$I\subseteq[n]$. Theorem~\ref{thm.dual.ineq} in turn implies that
\[
\|\Phi_I\|  \le \H(\Psi_I) \text{ and } \|\Psi_I\|^*  \le \H(\Phi_I)
\]
for all $I\subseteq[n]$.  The latter inequalities together with~\eqref{eq.HP} and~\eqref{eq.HQ} imply that $\H(\Phi) = \H(\Psi)$.
\end{proof}

We conclude this section by establishing a striking connection between the Hoffman constants of box-constrained feasibility problems and the chi condition measures introduced in the seminal papers~\cite{Olea90,Stew89} that we next recall.
 Suppose $A\in \R^{m\times n}$ is full row-rank,  $\R^m$ and $\R^n$ are endowed with Euclidean norms, and  $\mathscr D \subseteq \R^{n\times n}$ denotes the set of diagonal matrices in $\R^{n\times n}$ with positive diagonal entries.  The condition measures $\chi(A)$ and $\overline\chi(A)$ are defined as follows (see~\cite{Olea90,Stew89}):
\[
\chi(A):=\max_{D\in \mathscr D} \|(ADA\transp)^{-1}AD\|_2
\]
and
\[
\overline\chi(A):=\max_{D\in \mathscr D} \|A\transp(ADA\transp)^{-1}AD\|_2.
\]
We write $\|\cdot\|_2$ in the above expressions for $\chi(A)$ and $\overline \chi(A)$ to highlight that the operator norms in both cases are those induced by the Euclidean norms in $\R^n$ and $\R^m$.
Although it is not immediately evident, the constants $\chi(A)$ and $\overline \chi(A)$ are finite for any full row-rank matrix $A\in
\R^{m\times n}$.  This fact was independently shown by Ben-Tal and Teboulle~\cite{BenTT90},
Dikin~\cite{Diki74}, Stewart~\cite{Stew89}, and Todd~\cite{Todd90}.  The constants $\chi(A)$ and $\overline \chi(A)$
arise in and
play a key role in weighted least-squares problems~\cite{BobrV01,Fors96,ForsS01} and in linear
programming~\cite{HoT002,ToddTY01,Tunc99,VavaY96}.

We note that the quantity $\overline\chi(A)$ only depends on the space $\Image(A\transp)$ but not on the specific matrix $A\in\R^{m\times n}$.  In other words, $\overline\chi(A) = \overline\chi(\tilde A)$ for any other full row-rank matrix $\tilde A\in \R^{m\times n}$ such that $\Image(A\transp)=\Image(\tilde A\transp)$, or equivalently
$\overline\chi(A) = \overline\chi(RA)$ for any non-singular $R\in \R^{m\times m}$.  By contrast, $\chi(A)$ does depend on the specific matrix $A\in\R^{m\times n}$. Furthermore, since $\R^n$ and $\R^m$ are endowed with Euclidean norms, when the rows of $A$ are orthonormal we have $\|A\transp y\|_2 = \|y\|_2$ for all $y\in \R^m$.  Thus when the rows of $A$ are orthonormal, the following identity holds
\begin{equation}\label{eq.chi.barchi}
\overline\chi(A)=\chi(A).
\end{equation}

\medskip
Theorem~\ref{thm.chi.hoffman} below formalizes an interesting and surprising connection between Hoffman constants of  box-constrained feasibility problems and the chi condition measures $\chi(A)$ and $\overline \chi(A)$.  As we explain below, this result is  related to and enhances some equivalences between Hoffman constants, the chi condition measures, and other condition measures previously developed in our unpublished manuscript~\cite{PenaVZ19}.

\begin{theorem}\label{thm.chi.hoffman} Suppose  $A\in \R^{m\times n}$ is full row-rank and  both $\R^n$ and $\R^m$ are endowed with Euclidean norms.  Then for any $\ell,u\in \R^n$ with $\ell < u$ and $\B = \{x\in \R^n: \ell \le x \le u\}$ the following identities hold.  First, the chi condition measure $\chi(A)$ is identical to the Hoffman constants of the following box-constrained feasibility problems
\[
\begin{array}{r}
Ax-b = 0\\
x \in \B
\end{array}
\qquad 
\text{ and }
\qquad
\begin{array}{r}
c-A\transp y \in \B\;\;\;\\
y \in \R^m.
\end{array}
\]
In other words, 
\begin{equation}\label{eq.chi}
\H(P_{A,\B,\{0\}}) = \H(P_{A\transp,\R^m,-\B}) = \chi(A).
\end{equation}
Second, for $L_A:=\ker(A)\subseteq \R^n$ the chi condition measure $\overline\chi(A)$ is identical to the Hoffman constants of the following box-constrained feasibility problems
\[
\begin{array}{r}
x-b\in L_A\\
x \in \B
\end{array}
\qquad 
\text{ and }
\qquad
\begin{array}{r}
c- y \in \B\;\;\;\\
y \in L_A^\perp.
\end{array}
\]
In other words, 
\begin{equation}\label{eq.bar.chi}
\H(P_{I_n,\B,L_A}) = \H(P_{I_n,L_A^\perp,-\B}) =\overline\chi(A),
\end{equation}
where $I_n$ denotes the $n\times n$ identity matrix.
\end{theorem}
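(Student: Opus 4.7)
The plan is as follows. Both identities between pairs of Hoffman constants in~\eqref{eq.chi} and~\eqref{eq.bar.chi} follow immediately from Theorem~\ref{thm.box.dual}: for~\eqref{eq.chi} apply it with $L = \{0\}$ so $L^\perp = \R^m$, and for~\eqref{eq.bar.chi} apply it with $A$ replaced by $I_n$ and $L$ replaced by $L_A$. The substantive content is therefore to match the common value in each pair with the corresponding chi measure.

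For~\eqref{eq.chi}, I would follow the tangent-cone decomposition used in the proof of Theorem~\ref{thm.box.dual}. Combining Theorem~\ref{thm.Hoffman.char} with the bounding property established there yields
\[
\H(P_{A,\B,\{0\}}) = \max_{I \subseteq [n]} \|\Phi_I\|,
\]
where $\Phi_I(b) = \{x \in \R^n : Ax = b,\, x_I \ge 0,\, x_{I^c} \le 0\}$. Let $D_I$ be the diagonal $\pm 1$ matrix with entries $+1$ on $I$ and $-1$ on $I^c$. Because $D_I$ is a Euclidean isometry, the substitution $x = D_I z$ gives $\|\Phi_I\| = \|P_{AD_I}\|$ with $P_{AD_I}(b) = \{z \ge 0 : AD_I z = b\}$. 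I would then invoke the classical basis-pseudoinverse characterization $\chi(A) = \max\{\|A_J^{-1}\|_2 : J \subseteq [n],\, |J| = m,\, A_J \text{ nonsingular}\}$ together with a KKT analysis of the program $\min\{\|z\|_2 : z \ge 0,\, AD_I z = b\}$ to establish $\max_I \|P_{AD_I}\| = \chi(A)$. The upper bound is routine: the minimum-norm nonnegative solution, when feasible, is supported on some $J$ with $\mathrm{rank}(A_J) = m$, satisfies $z_J = (AD_I)_J^\dagger b$, and $\|(AD_I)_J^\dagger\|_2 = \|A_J^\dagger\|_2$ since $\pm 1$ column-sign flips preserve the spectral norm. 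For the lower bound, pick a basis $J^*$ and unit vector $b^* \in \R^m$ attaining $\chi(A) = \|A_{J^*}^{-1} b^*\|_2$, and choose $I^*$ so that (i) its signs on $J^*$ match those of $A_{J^*}^{-1} b^*$, and (ii) its signs on $(J^*)^c$ render the KKT dual multipliers nonnegative. These two constraints act on disjoint index sets and can be satisfied simultaneously, making the pseudoinverse-based candidate globally optimal for the constrained least-squares program and yielding $\|P_{AD_{I^*}}\| \ge \chi(A)$.

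For~\eqref{eq.bar.chi}, I would reduce to~\eqref{eq.chi} via orthonormalization. The constant $\H(P_{I_n, \B, L_A})$ depends on $A$ only through $L_A = \ker A$, and $\overline\chi(A)$ depends only on $\Image(A\transp) = L_A^\perp$; both are invariant under $A \mapsto \tilde A := UA$ for any nonsingular $U \in \R^{m \times m}$. Choose $U$ so that $\tilde A$ has orthonormal rows; then $\overline\chi(\tilde A) = \chi(\tilde A)$ by~\eqref{eq.chi.barchi}, and the map $w \mapsto \tilde A\transp w$ is a Euclidean isometry from $\R^m$ onto $L_A^\perp = \Image(\tilde A\transp)$. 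This isometry identifies $P_{I_n, L_A^\perp, -\B}$ with $P_{\tilde A\transp, \R^m, -\B}$ and preserves the Hoffman constant, so combined with Theorem~\ref{thm.box.dual} and the already-proved~\eqref{eq.chi} applied to $\tilde A$:
\[
\H(P_{I_n, \B, L_A}) = \H(P_{I_n, L_A^\perp, -\B}) = \H(P_{\tilde A\transp, \R^m, -\B}) = \chi(\tilde A) = \overline\chi(A).
\]

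The main technical hurdle is the lower bound in~\eqref{eq.chi}: verifying that the two sign-pattern requirements on $I^*$ are mutually consistent and that the resulting pseudoinverse candidate is the actual global minimum (not merely a stationary point on its support) of the constrained quadratic program. Fortunately the two requirements involve disjoint coordinates, and the degenerate cases (zero entries of $A_{J^*}^{-1} b^*$ on $J^*$ or vanishing $A_i\transp \lambda$ on $(J^*)^c$) are harmless since either sign choice works at the affected coordinates.
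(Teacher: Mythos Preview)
Your overall strategy matches the paper's: both equalities between pairs of Hoffman constants follow directly from Theorem~\ref{thm.box.dual}, the reduction of~\eqref{eq.bar.chi} to~\eqref{eq.chi} via an orthonormal-row matrix $\tilde A$ with $\ker(\tilde A)=L_A$ is exactly what the paper does, and the substantive step is connecting $\H(P_{A,\B,\{0\}})$ to $\chi(A)$ through the signature-matrix decomposition of the tangent cones.

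Where you diverge is in which side of the duality you exploit for that connection. The paper works on the \emph{dual} side: it packages the tangent-cone decomposition as Lemma~\ref{cor.signed}, obtaining $\H(P_{A\transp,\R^m,-\B})=\max_{D\in\SS}\H(P_{AD}^*)$, and then invokes two cited characterizations---Forsgren's formula $\chi(A)=\max_{|J|=m,\,A_J\text{ nonsing.}}\max_{x\in\R^J,\,\|A_Jx\|\le 1}\|x\|$ and the analogous formula for $\H(P_A^*)$ with $\R^J$ replaced by $\R^J_+$. The match is then almost immediate once one maximizes over sign patterns. You instead work on the \emph{primal} side, computing $\max_I\|P_{AD_I}\|$ via a direct KKT analysis of $\min\{\|z\|_2: z\ge 0,\,AD_Iz=b\}$. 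Your lower-bound construction (choosing the signs of $D_{I^*}$ on $J^*$ to make $A_{J^*}^{-1}b^*$ nonnegative, and on $(J^*)^c$ to make the KKT multipliers nonnegative) is correct and nice; the paper's route trades this KKT work for a literature citation.

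One genuine inaccuracy in your upper bound: the minimum-norm nonnegative solution need \emph{not} be supported on a set $J$ with $\operatorname{rank}(A_J)=m$ (take $A=I_2$, $b=e_1$: the support is $\{1\}$). What is true is that on the support $J$ one has $z^*_J=(AD_I)_J^\dagger b$, and $\|(AD_I)_J^\dagger\|\le\chi(A)$ still holds: pass to a maximal linearly independent subset $J_0\subseteq J$ (column deletion can only shrink the smallest nonzero singular value, so $\|A_J^\dagger\|\le\|A_{J_0}^\dagger\|$), then extend $J_0$ to a nonsingular $m$-subset $J'$ and note $\sigma_m(A_{J'})\le\min_{\|x_{J_0}\|=1}\|A_{J_0}x_{J_0}\|=\sigma_{|J_0|}(A_{J_0})$, giving $\|A_{J_0}^\dagger\|\le\|A_{J'}^{-1}\|\le\chi(A)$. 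This patch is routine, but it is not what you wrote; the paper's dual-side argument sidesteps the issue entirely.
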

The proof of Theorem~\ref{thm.chi.hoffman} relies on Lemma~\ref{cor.signed} which is of independent interest.  The statement of Lemma~\ref{cor.signed} relies on the following set of {\em signature matrices} $\SS \subseteq\R^{n\times n}$:
\begin{equation}\label{eq.signature}
\SS:=\{\Diag(d): d \in \{-1,1\}^n\}.
\end{equation}

\begin{lemma}\label{cor.signed}
Suppose $A\in \R^{m\times n}$, $L\subseteq \R^n$ is a linear subspace, and $\B:=\{x\in \R^n: \ell \le x \le u\}$ for some $\ell,u\in \R^n$ with $\ell < u.$ In addition, suppose $\R^n$ and $\R^m$ are endowed with Euclidean norms. 
Then
\begin{equation}\label{eq.signed.identity}
\H( P_{A,\B,L}) = \max_{D\in \SS} \H( P_{AD,\R^n_+,L}) \; \text{ and } \; 
\H(P_{A\transp,L^\perp,-\B})=\max_{D\in \SS} \H(P_{DA\transp,L^\perp,-\R^n_+}).
\end{equation}
In particular, 
\begin{equation}\label{eq.signed.identity.special}
\H( P_{A,\B,\{0\}}) = \max_{D\in \SS} \H( P_{AD})  \; \text{ and } \;  
 \H(P_{A\transp,\R^m,-\B})=\max_{D\in \SS} \H(P_{AD}^*),
\end{equation}
where $P_A$ and $P_A^*$ are as
defined in~\eqref{eq.PA} and~\eqref{eq.QA} respectively.

\end{lemma}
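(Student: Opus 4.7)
The plan is to reuse the tangent-cone decompositions of $\H(P_{A,\B,L})$ and $\H(P_{A\transp,L^\perp,-\B})$ established in the proof of Theorem~\ref{thm.box.dual}, and to connect each piece to a cone-constrained counterpart via a change of variables by a signature matrix. Since the underlying norms are Euclidean, every $D\in\SS$ is an orthogonal matrix and the map $x\mapsto Dx$ is a norm-preserving bijection of $\R^n$. This is the crucial ingredient that makes each such substitution an isometry.

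For the first identity in~\eqref{eq.signed.identity}, I would start from the formula $\H(P_{A,\B,L})=\max_{I\subseteq[n]}\|\Phi_I\|$ obtained in the proof of Theorem~\ref{thm.box.dual}, where $\Phi_I$ is the sublinear mapping with graph $\{(Ax-s,x):\, x_I\ge 0,\, x_{I^c}\le 0,\, s\in L\}$. For each $I$, let $D_I\in\SS$ have $(D_I)_{ii}=1$ for $i\in I$ and $(D_I)_{ii}=-1$ for $i\in I^c$. The bijection $x=D_I z$ sends $\{x:\, x_I\ge 0,\, x_{I^c}\le 0\}$ onto $\R^n_+$ and preserves the Euclidean norm, so a direct computation gives $\|\Phi_I\|=\|P_{AD_I,\R^n_+,L}\|$; taking the maximum over $I$ yields $\H(P_{A,\B,L})=\max_{D\in\SS}\|P_{AD,\R^n_+,L}\|$. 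Since $\|\Phi\|\le\H(\Phi)$ for every polyhedral sublinear mapping $\Phi$ (indeed $\graph(\Phi)=T_{(0,0)}(\Phi)$, so $\|\Phi\|$ appears in the maximum of Theorem~\ref{thm.Hoffman.char}(a)), this already gives $\H(P_{A,\B,L})\le\max_{D\in\SS}\H(P_{AD,\R^n_+,L})$. For the reverse inequality I would fix $D\in\SS$ and apply Theorem~\ref{thm.Hoffman.char}(a) to $P_{AD,\R^n_+,L}$: its tangent cones have the form $\{(ADz-s,z):\, z_I\ge 0,\, s\in L\}$ for some $I\subseteq[n]$, and the substitution $w=Dz$ rewrites the norm of such a tangent-cone mapping as $\|\Phi_{I^+,I^-}\|$, where $I^+:=\{i\in I: D_{ii}=1\}$ and $I^-:=\{i\in I: D_{ii}=-1\}$ are automatically disjoint. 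The bounding property from the proof of Theorem~\ref{thm.box.dual} bounds $\|\Phi_{I^+,I^-}\|$ above by $\max_{I'\subseteq[n]}\|\Phi_{I'}\|=\H(P_{A,\B,L})$, so $\H(P_{AD,\R^n_+,L})\le\H(P_{A,\B,L})$ for every $D\in\SS$, closing the first identity.

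The dual identity is proved by a symmetric argument: the proof of Theorem~\ref{thm.box.dual} gives $\H(P_{A\transp,L^\perp,-\B})=\max_{I\subseteq[n]}\|\Psi_I\|$, where $\Psi_I$ has graph $\{(A\transp y+s,y):\, y\in L^\perp,\, s_I\ge 0,\, s_{I^c}\le 0\}$. The substitution $s=D_I t$ with $t\ge 0$, combined with replacing $c$ by $D_I c$ (both Euclidean isometries), identifies $\|\Psi_I\|$ with $\|P_{D_I A\transp,L^\perp,-\R^n_+}\|$; the same norm-versus-Hoffman-constant comparison, now applied to tangent cones of $P_{DA\transp,L^\perp,-\R^n_+}$, then yields $\H(P_{A\transp,L^\perp,-\B})=\max_{D\in\SS}\H(P_{DA\transp,L^\perp,-\R^n_+})$. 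The identities~\eqref{eq.signed.identity.special} follow by specializing~\eqref{eq.signed.identity} to $L=\{0\}$ and invoking $P_A=P_{A,\R^n_+,\{0\}}$ together with $P_A^*=P_{A\transp,\R^m,-\R^n_+}$.

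The main obstacle will be the bookkeeping that relates the pair $(D,I)$ parametrizing a tangent cone of the cone-constrained problem to the disjoint pair $(I^+,I^-)$ parametrizing a tangent cone of the box-constrained problem, and ensuring that each isometry-based substitution leaves the relevant Hoffman-type quantities invariant. Once these bijections are pinned down, the identity $\|Dz\|_2=\|z\|_2$ for $D\in\SS$ does all the analytic work.
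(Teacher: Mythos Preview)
Your proposal is correct and rests on the same two ingredients as the paper's proof: the tangent-cone decompositions~\eqref{eq.HP}--\eqref{eq.HQ} from the proof of Theorem~\ref{thm.box.dual}, and the fact that each $D\in\SS$ is a Euclidean isometry of $\R^n$. The paper's argument is shorter, however, because it exploits the \emph{second} equality in~\eqref{eq.HP}, namely $\H(P_{A,\B,L})=\max_{I}\H(\Phi_I)$, rather than only the first one $\H(P_{A,\B,L})=\max_{I}\|\Phi_I\|$ that you use. Since the graphs of $\Phi_I$ and $P_{AD_I,\R^n_+,L}$ are related by the linear isometry $(b,x)\mapsto(b,D_Ix)$ of $\R^m\times\R^n$, one gets $\H(\Phi_I)=\H(P_{AD_I,\R^n_+,L})$ directly (the Hoffman constant is defined purely in terms of distances), and the identity follows in one step without your separate $\le$ and $\ge$ arguments. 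Your detour through tangent cones of $P_{AD,\R^n_+,L}$ and the bounding property is valid but unnecessary once you notice the isometry transports the full Hoffman constant, not just the norm.
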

\begin{proof}
There is a natural one-to-one correspondence between the subsets of $[n]$ and the set of signature matrices $\SS$, namely
\[
I\subseteq [n] \leftrightarrow D_I = \Diag(d^I) \in \SS, \text{ where } d^I_i = 1 \text{ for } i\in I \text{ and } d^I_i = -1 \text{ for } i\not \in I.
\]
This correspondence and the fact that $\R^n$ is endowed with the Euclidean norm allow us to restate the  identities~\eqref{eq.HP},~\eqref{eq.HQ} in the proof of Theorem~\ref{thm.box.dual} as follows
\[
\H( P_{A,\B,L}) = \max_{D\in \SS} \H( P_{AD,\R^n_+,L})\]
and
\[
\H(P_{A\transp,L^\perp,-\B}) = \max_{D\in \SS} \H(P_{DA\transp,L^\perp,-\R^n_+}).
\]
Therefore~\eqref{eq.signed.identity} follows.  Identity ~\eqref{eq.signed.identity.special} in turn follows from~\eqref{eq.signed.identity} and the definition~\eqref{eq.PA} and~\eqref{eq.QA} of $P_A$ and $P_A^*$ respectively.
\end{proof}

The proof of Theorem~\ref{thm.chi.hoffman} relies on the following notation.  For $A\in \R^{m\times n}$ and $I\subseteq [n]$ let $A_I$ denote the $m\times |I|$ submatrix of $A$ defined by the columns in $I$.

\begin{proof}[Proof of Theorem~\ref{thm.chi.hoffman}] The first identity~\eqref{eq.chi} is an immediate consequence of Theorem~\ref{thm.box.dual}, identity~\eqref{eq.signed.identity.special} in Lemma~\ref{cor.signed} and the following two facts.

\noindent
{\bf Fact 1.} The quantity $\chi(A)$ can be characterized as follows~\cite[Corollary 2.2]{Fors96}:  
\begin{equation}\label{fact.1}
\chi(A) = \max_{I\subseteq [n], |I|=m \atop A_I \text{nonsingular}} \max_{x\in \R^I \atop \|A_Ix\|_2 \le 1} \|x\|_2.
\end{equation}
{\bf Fact 2.} The quantity $\H(P_A^*)$ can be characterized as follows (see, e.g.~\cite[Theorem 2.7]{KlatT95} or~\cite[Lemma 15]{WangL14}):
\begin{equation}\label{fact.2}
\H(P_A^*) = \max_{I\subseteq [n], |I|=m \atop A_I \text{nonsingular}} \max_{x\in \R^I_+ \atop \|A_Ix\|_2 \le 1} \|x\|_2.
\end{equation}
Indeed,~\eqref{fact.1}~and~\eqref{fact.2} imply that 
\begin{equation}\label{eq.chi.hoffman}
\max_{D\in \SS} \H(P_{AD}^*) \le \max_{D\in \SS} \chi(AD) = \chi(A).
\end{equation}
On the other hand, suppose $\bar I$ and $\bar x\in \R^{\bar I}$ attain $\chi(A)$ in~\eqref{fact.1}.  Choose $\hat D\in \SS$ such that $\hat D_{ii} = \text{sign}(\bar x_{i})$ for each $i\in \bar I$ and let $\hat x:= \hat D_I \bar x \in \R^{\bar I}_+$.  Observe that $(A\hat D)_{\bar I} = A_{\bar I}\hat D_{\bar I}$ is nonsingular and 
\[
\|(A\hat D)_{\bar I} \hat x\|_2 = \|A_{\bar I}\hat D_{\bar I}\hat x\|_2 = \|A_{\bar I}\bar x\|_2 = 1.
\]
Thus by~\eqref{fact.2}
\begin{equation}\label{eq.chi.hoffman.2}
\max_{D\in \SS} \H(P_{AD}^*) \ge \H(P_{A\hat D}^*) \ge \max_{x\in \R^{\bar I}_+ \atop \|(A\hat D)_{\bar I} x\|_2 \le 1} \|x\|_2 \ge \|\hat x\|_2 = \|\bar x\|_2= \chi(A).
\end{equation}
Identity~\eqref{eq.chi} thus follows by putting together~\eqref{eq.chi.hoffman},~\eqref{eq.chi.hoffman.2},~\eqref{eq.signed.identity.special}, and Theorem~\ref{thm.box.dual}. 

To prove the second identity~\eqref{eq.bar.chi}, let $\tilde A \in \R^{m \times n}$ be a matrix with orthonormal rows such that $\ker(\tilde A) = L_A$. Then by~\eqref{eq.chi} 
and Theorem~\ref{thm.box.dual}
\begin{equation}\label{eq.bar.chi.equiv}
\overline\chi(A) = \chi(\tilde A) = \H(P_{\tilde A\transp,\R^m,-\B}).
\end{equation}
Since the rows of  $\tilde A$ are orthonormal, it holds that $\|\tilde A\transp y\|_2 = \|y\|_2$ for all $y\in\R^m$.  Hence $ \H(P_{\tilde A\transp,\R^m,-\B}) = \H(P_{I_n,L_A^\perp,-\B})$ and thus~\eqref{eq.bar.chi} follows from~\eqref{eq.bar.chi.equiv}.   
\end{proof}

To conclude this section we briefly comment on the relationship between Theorem~\ref{thm.chi.hoffman} and some previous developments in the unpublished manuscript~\cite{PenaVZ19}.  In~\cite[Theorem 1]{PenaVZ19} we showed that
\[
\chi(A) = \max_{D\in \SS} \H(P_{AD}^*).
\]
We replicate this result as part of identity~\eqref{eq.signed.identity.special} in 
Lemma~\ref{cor.signed}.  Identity~\eqref{eq.signed.identity.special} in turn is a key step in the proof of Theorem~\ref{thm.chi.hoffman}.  Theorem~\ref{thm.chi.hoffman} adds the interesting novel extension that these quantities match the box-constrained Hoffman constants $\H(P_{A,\B,\{0\}}) = \H(P_{A\transp,\R^m,-\B})$.  

The manuscript~\cite{PenaVZ19} also presents other interesting results relating Hoffman constants, chi numbers, Renegar's and Grassmannian condition numbers.

\section{Conclusions}
Theorem~\ref{thm.dual.ineq} provides a duality inequality between the norm of a polyhedral sublinear mapping and the Hoffman constant of its upper adjoint.  As a consequence, we obtain a novel duality relationship between the Hoffman constants of the feasibility problems~\eqref{eq.onesided.systems.general} and~\eqref{eq.onesided.systems.general.dual} whenever $R,S$ are polyhedral cones and either a primal or a dual Slater condition holds.  Theorem~\ref{thm.dual.ineq} also yields an interesting identity between the Hoffman constants of the box-constrained feasibility problems~\eqref{eq.box-constrained} and~\eqref{eq.box-constrained.dual}.  We conjecture that similar identities hold for other non-conic pairs of polyhedral feasibility problems.

For the special case when the underlying spaces are endowed with Euclidean norms, we also establish a surprising identity between the Hoffman constants of the box-constrained feasibility problems~\eqref{eq.box-constrained} and~\eqref{eq.box-constrained.dual} and the chi condition measures~from~\cite{Olea90,Stew89}. We conjecture that this identity can be extended to the kappa measure of~\cite{DaduHNV20} and could be used to shed new light on the various {\em proximity results} that underlie important developments for linear programming such as those documented in~\cite{ApplHLL22,DaduHNV24,MontT03,VavaY96}.  We will pursue this line of research in some future work.

The duality results developed in this paper also set the stage for more general duality developments.  In particular,  the recent paper~\cite{CamaCGP22} extends our characterizations of the Hoffman constant~\cite{PenaVZ21} to a more general context concerning the {\em calmness modulus} of solution mappings for semi-infinite systems of linear inequalities.  It is thus natural to explore what duality relationships hold in that more general context. The calmness modulus can be seen as a {\em local} version of the (global) Hoffman constant defined in~\eqref{eq.def.Hoffman}.  More precisely, as described in~\cite[Section 3.8]{DontR09}, a set-valued mapping $\Phi:\R^p\rightrightarrows \R^q$ is calm at $\bar u$ for $\bar v$ if $(\bar u,\bar v)\in \graph(\Phi)$ and there exists a constant $\kappa \ge 0$ along with neighborhoods $U$ of $\bar u$ and $V$ of $\bar v$ such that
\[
\dist(v,\Phi(\bar u))\le \kappa \cdot \dist(\bar u,\Phi^{-1}(v)\cap U) \text{ for all } v\in V.
\] 
Notice the similarity between this inequality and the inequality preceding~\eqref{eq.def.Hoffman}.

The calmness modulus of $\Phi$ at $\bar u$ for $\bar v$ is the infimum of $\kappa$ over all neighborhoods $U$ of $\bar u$ and $V$ of $\bar v$ such that the above inequality holds.  A duality result for calmness would require extensions of, or new techniques beyond, the global and polyhedral nature of our main foundational blocks, namely Theorem~\ref{thm.Hoffman.char} and Theorem~\ref{thm.dual.ineq}.

The recent work of Wirth et al.~\cite{WirtPP25} on the Frank-Wolfe algorithm relies heavily on the {\em inner facial distance} and {\em outer facial distance} of a polytope, both of which have natural geometric interpretations and local versions as discussed in~\cite{PenaR19,WirtPP25}.  The inner facial distance was initially introduced in~\cite{PenaR19} and shown to be a Hoffman constant in~\cite{GutmP21}. 
We conjecture that the outer facial distance is a Hoffman constant as well but that has not been formally shown as of the time of this writing.  We also conjecture that there is a duality relationship between the inner and outer facial distances.  Positive answers to these conjectures would in turn have interesting implications on the convergence properties of first-order algorithms in light of the recent developments in~\cite{ApplHLL22,GutmP21,Hind23,LacoJ15,PenaR19,WirtPP25}.


\end{document}